\newcommand{\tnorm}[1]{\vert\hspace{-0.3mm}\Vert#1\Vert\hspace{-0.3mm}\vert}
\newcommand{\jump}[1]{[\![#1]\!]}
\newtheorem{remark}{\it Remark\/}
\title{Error estimates for shock capturing finite element
  approximations of the one dimensional Burgers' equation}
\author{Erik Burman\thanks{Department of Mathematics, 
University College London,
UK-- 
United Kingdom; ({\tt E.Burman@ucl.ac.uk})}
}
\begin{document}

\maketitle

\begin{abstract}
We propose an error analysis in weak norms of a shock capturing finite
element method for the Burgers' equation. The estimates can be related
to estimates of certain filtered quantities and are robust in the
inviscid limit. Using a total variation apriori bound on the discrete
solution and an interpolation inequality error estimates in 
$L^p$-norms can are obtained using interpolation. 
\end{abstract}


\newcommand{\cut}{c}
\def\IR{\mathbb R}
\def\Ext{\mbox{\textsf{E}}}

\section{Introduction}
There exists a vast litterature on
the design and convergence of numerical methods for nonlinear scalar
conservation laws dating back to the seminal work of Krushkov
\cite{Kru66}. Error estimates are obtained using entropy stability and the
so-called variable doubling technique. Asymptotic results have also
been obtained using entropy stability and compensated compactness. 

For work on convergence and error estimates using finite difference methods
we refer to \cite{Kru66, Kuz76, CM80, BO81, NT92, CoLeF93,EK00, LeF02},
using finite volume methods to \cite{CoCoLeF94, CoCoLeF95, CH99} and
finally for work on finite element methods see \cite{JS87, JS95,CG96a,Ohl01}.

For an introduction to these techniques we refer to the review article
by Cockburn \cite{Cock03} or the one by Tadmor \cite{Tad98}.

In this work we adopt a strategy that is reminiscent of the negative
norm estimates introduced by Tadmor and Nessyahu in \cite{NT92}. The key
argument of their analysis is to use a duality argument to get continuous
dependence on initial data for the adjoint perturbation equation of
the Burgers' equation in the $Lip'$-norm, i.e. the norm associated to the dual of
the space of Lipschitz-continuous functions. Provided the numerical
scheme has certain stability properties, in particular that the
discrete solution satisfies the discrete maximum principle and the
Oleinik E-condition (see \cite{Ole63}, this continuous dependence estimate
leads to estimates in a weak norm. Estimates in general $L^p$-norms
may then be recovered using interpolation.

Here we combine these ideas with the theory of dual weighted a
posteriori error estimates for finite element methods \cite{JS95,
  HMSW99} for the viscous Burgers' equation in one space dimension.
For a shock-capturing finite element method propose to estimate the error of certain filtered quantities
associated to weighted weak norms \cite{Bu98}.
 Indeed we apply the following differential filter to the error,
\begin{equation}\label{filter_def}
-\delta^2 \partial_{xx} \tilde u + \tilde u = u(\cdot,T) \quad \mbox{on } I
\end{equation}
with periodic boundary conditions on $\tilde u$ and $\partial_x \tilde
u$. The coefficient $\delta$ is proportional to the filter width.
Equation \eqref{filter_def} naturally leads to the error norm
\[
\tnorm{\tilde u - \tilde u_h}_\delta := \left(\|\delta \partial_x
  (\tilde u - \tilde u_h)\|^2 +  \|\tilde u - \tilde u_h\|^2\right)^{\frac12}
\]
where $\|\cdot\|$ denotes the $L^2$-norm. We prove an a posteriori
error estimate for this norm, proving boundedness of the stability
factors. The errors in the filtered quantities can be associated to
the estimation of certain weighted averages using a weight function in $H^1$.

For the discretization we will consider two different stabilized finite element
methods. In both cases the standard Galerkin method (with diagonal
mass for the time derivative) is supplemented with a term of
artificial viscosity type. First the classical linear artifical viscosity
resulting in a first order scheme, related to classical upwind
schemes for finite elements and vertex cell finite
volume methods and then a weakly consistent nonlinear viscosity in
the spirit of that proposed in \cite{Bu07b}. The latter is a shockcapturing
technique related to those proposed in \cite{JS87} or the entropy
viscosity of \cite{GPP11}. A key observation of this work is that the
dual stability required for the error estimates leads to design
criteria that have to be satisfied by the shock-capturing term.

Our main result then follows by using the
discrete stability of the numerical scheme to upper bound the
residuals of the a posteriori estimate resulting in the
following error estimate
\begin{equation}\label{filter_estimate}
\|\delta \partial_x (\tilde u - \tilde u_h)(T)\|_{L^2(I)}+\|(\tilde u - \tilde
u_h)(T)\|_{L^2(I)} \leq \tilde C(u_0,T) \exp({D_0 T}) \left(\frac{h}{\delta^2}\right)^\frac12
\end{equation}
where $\tilde u$ and $\tilde u_h$ are the filtered exact and
computational solution respectively. The constant in \eqref{filter_estimate} depend
only on the intial data, the mesh geometry and the final time. We will
use the notation $U_0 := \sup_{x \in I} |
\pi_h u_0(x)|$ and $D_0 := \sup_{x
  \in I} \tfrac12 \partial_x (u_0 + \pi_h u_0(x))$, where $\pi_h$ denotes the
$L^2$-projection onto the finite element space. We will choose $u_0$ as
a smooth function and by the stability of the $L^2$-projection on
regular meshes we have $U_0 \lesssim \sup_{x \in I} |u_0(x)|$ and $D_0 \lesssim \sup_{x
  \in I} |\partial_x  u_0(x)|$, so that estimates depending on $U_0$
and $D_0$ are indeed mesh
independent. Here and in the following we use the notation $a \lesssim
b$ 
defined by $a \leq Cb$ with $C$ a constant
independent of $h$, the physical parameters (except if they can be
assumed to make an $O(1)$ contribution) and of the exact
solution. We will also use $a \sim b$ for $a \lesssim b$ and $b
\lesssim a$. For simplicity we assume $u_0 \in C^\infty(I)$, with all
the derivatives matcing across the periodic boundaries,
this does not exclude the formation of sharp layers with gradients of
order $\nu^{-1}$ at later times.

The derivation of the estimate \eqref{filter_estimate} uses:
\begin{itemize}
\item[--] stability estimates for the finite element
  method,
\item[--] maximum principles for the finite element solution and its
  first derivative,
\item[--] a priori stability estimates on a linearized dual problem
  with regularized data,
\item[--] Galerkin orthogonality and approximability.
\end{itemize}

Using a Galiardo-Nirenberg interpolation estimate and the previous stability and error
estimates we also obtain the following error estimate in the
$L^p$-norm
\begin{equation}\label{L2error_estimate}
\|(u- u_h)(\cdot,t)\|_{L^p(I)} \leq C h^{\frac{1}{3p}}, \, \forall t>0.
\end{equation}
The constant depends on the all the constants of the previous
estimates, but is independent of the viscosity. 

All these results are obtained for the semi-discretization in space
only. The extension to the fully discrete case is
straightforward in the case of linear artificial viscosity using
previous results on finite difference methods, but not so
immediate when nonlinear viscosity is used. Indeed in the latter case
implicit schemes require regularization of the nonlinearity and the
effect of which must be assessed and for explicit schemes,
even $L^2$-stability for nonlinear viscosity methods is relatively
recent \cite{BGP13}. We therefore leave this aspect for future work.

\section{The Burgers' equation with dissipation}
Consider the simple model case
of the Burgers' equation with periodic boundary conditions, on the space-time domain $Q:= I \times
(0,T)$, with 
$I:=(0,1)$
\begin{equation}\label{burger}
\begin{array}{rcl}
\partial_t u + \frac12 \partial_x u^2 - \nu \partial_{xx} u &=& 0 \mbox{
  in } Q\\[3mm]
u(0,t) &=& u(1,t) \mbox{
  for } t \in (0,T)\\[3mm]
\partial_x u(0,t) &=& \partial_x u(1,t) \mbox{
  for } t \in (0,T)\\[3mm]
u(x,0) &=& u_0(x) \mbox{
  for } x \in I.
\end{array}
\end{equation}

The wellposedness of the equation \eqref{burger} for $\nu\ge0$ is well known it is also known that for $\nu>0$ by parabolic
regularization the solution is $C^\infty(I)$. This high regularity
however does not necessarily help us when approximating the solution,
since we are interested in computations using a mesh-size that is much
larger than the viscosity and still want the bounds to be independent
of high order Sobolev norms of the exact solutions and of $\nu$. Let
us first show how standard $L^2$-energy arguments fail when sharp
gradients develop in the solution.
\subsection{$L^2$-stability of Burgers' equation}
Consider a general perturbation $\eta(x)$ of the initial data of \eqref{burger}.
\begin{equation}\label{burger_pert}
\begin{array}{rcl}
\partial_t \hat u + \frac12 \partial_x \hat u^2 -
\nu \partial_{xx} \hat u &=& 0 \mbox{
  in } Q\\[3mm]
\hat u(0,t) &=& \hat u(1,t) \mbox{
  for } t \in (0,T)\\[3mm]
\partial_x \hat u(0,t) &=& \partial_x \hat u(1,t) \mbox{
  for } t \in (0,T)\\[3mm]
\hat u(x,0) &=& u_0(x) + \eta(x) \mbox{
  for } x \in I.
\end{array}
\end{equation}
Taking the difference of \eqref{burger_pert} and \eqref{burger} leads
to the perturbation equation for $\hat e := \hat u - u$
with $a(u,\hat u) := \tfrac12 (u +
\hat u)$,
\begin{equation}\label{pert_equation}
\begin{array}{rcl}
\partial_t \hat e + \partial_x (a(u,\hat u) \hat e) -
\nu \partial_{xx} \hat e &=& 0 \mbox{
  in } Q, \\[3mm]
\hat e(0,t) &=& \hat e(1,t) \mbox{
  for } t \in (0,T)\\[3mm]
\partial_x \hat e(0,t) &=& \partial_x \hat e(1,t) \mbox{
  for } t \in (0,T)\\[3mm]
\hat e(x,0) &=&  \eta(x) \mbox{
  for } x \in I.
\end{array}
\end{equation}
Multiplying equation \eqref{pert_equation} by $\hat e$ and
integrating over $Q$ leads to the energy equality
\[
\frac12 \|\hat e(T)\|_{L^2(I)}^2 + \|\nu^{\frac12} \partial_x
\hat e\|^2_{L^2(Q)} =\frac12 \|\eta\|_{L^2(I)}^2
-\frac12 \int_{Q} (\partial_x a(u,\hat u)) \hat e^2.
\]
We know that due to shock formation $\|\partial_x a(u,\hat u)\|_{L^\infty(I)}
\sim \nu^{-1}$ \cite{Ole63}. Any attempt to obtain control of $\|\hat
e(T)\|_{L^2(I)}^2$ in terms of the initial data will 
rely on Gronwall's lemma, leading to
\[
\|\hat e(T)\|_{L^2(I)}^2 \leq C_a \|\eta\|_{L^2(I)}^2
\]
with the exponential factor
\[
C_a :=\exp({\|\partial_x a(u,\hat u)\|_{L^\infty(Q)} T}) \sim  \exp(T/\nu).
\]
This factor obviously makes the estimate meaningless for large
gradients/small viscosities.
It tells us that the energy method only gives us useful information on
the stability up to the formation of shocks.  Using this type of argument in the analysis of
the finite element method leads to error estimates of the type derived
in \cite{BF07}, useful only for solutions with moderate gradients.
\subsection{Maximum principles for Burgers' equation}
It is well known that the equation \eqref{burger} satisfies a maximum
principle \cite{Ole63} on the form:
\begin{equation}\label{CMP}
\sup_{(x,t) \in  Q} |u(x,t)| \leq \sup_{x \in I} |u_0(x)|.
\end{equation}
This follows using standard techniques recalling the smoothness of the solution $u$ (or in the hyperbolic case, using the
method of characteristics). For our purposes we also need some precise information on the
derivative. Since the solution of \eqref{burger} is smooth we may
derive the equation in space to obtain the following equation for the
space derivative $w:=\partial_x u$:
\begin{equation}\label{burger_derivative}
\begin{array}{rcl}
\partial_t w + u \partial_x w -
\nu \partial_{xx} w &=& -w^2 \mbox{
  in } Q\\[3mm]
w(0,t) &=& w(1,t) \mbox{
  for } t \in (0,T)\\[3mm]
\partial_x w(0,t) &=& \partial_x w(1,t) \mbox{
  for } t \in (0,T)\\[3mm]
w(x,0) &=& \partial_x u_0(x) \mbox{
  for } x \in I.
\end{array}
\end{equation}
Assuming that for some time $t>0$ $w$ takes its maximum in some point $x \in I$ and noting
that $\partial_x w(x,t) =0$ and $\partial_{xx} w(x,t) < 0$ it follows that
$\partial_t w(x,t)<0$ at the maximum and we deduce the bound:
\begin{equation}\label{grad_bound}
\max_{(x,t) \in Q}\partial_x u \leq \max_{x \in I}\partial_x u_0.
\end{equation}
It follows by the smoothness of the initial data that the space
derivative is bounded above for all times.
\section{Artificial viscosity finite element method}
Discretize the interval $I$ with $N$ elements and let the local
mesh-size be defined by $h := 1/N$.
We denote the computational nodes by $x_i := i \, h$, $i=0,\hdots,N$,
defining the elements $I_j:=[x_j,x_{j+1}]$, $j=0,\hdots,N-1$,
and the standard nodal basis functions $\{v_i\}_{i=0}^{N}$, such that $v_i(x_j)
= \delta_{ij}$, with $\delta_{ij}$ the Kronecker delta. To impose
periodic boundary conditions we identify the node $x_0$ with $x_N$
and define the corresponding basis function $v_{0N}:(x_0,x_1) \cup
(x_{N-1},x_N) \mapsto \mathbb{R}$ by $v_0$ on $(x_0,x_1)$
and by $v_N$ on $(x_{N-1},x_N)$. This basis function then replaces $v_0$ and
$v_N$, leading to a total of $N$ degrees of freedom. For simplicity we
use the notation $v_0$ for the basis function $v_{0N}$. The finite
element space is given by
\[
V_h:=\left\{ \sum_{i=0}^{N-1} u_i v_i, \mbox{ where } \{u_i\}_{i=0}^{N-1} \in \mathbb{R}^N\right\}.
\]
We define the standard $L^2$ inner product on $X \subset I$ by
\[
(v_h,w_h)_X := \int_X v_h w_h\,\mbox{d}x.
\]
The
discrete form corresponding to mass-lumping reads
\[
(v_h,w_h)_h := \sum_{i=0}^{N-1} v_h(x_i) w_h(x_i) h.
\]
The associated norms are defined by $\|v\|_X:=(v,v)_X^{\frac12}$, for all
$v \in L^2(X)$, if $X$ coincides with $I$ the subscript is dropped, and $\|v_h\|_h:= (v_h,v_h)_h^{\frac12}$ for all $v_h
\in V_h$.
Note that, by norm equivalence on discrete spaces, for all $ v_h \in V_h$
there holds
\[
\|v_h\|_h \lesssim \|v_h\| \lesssim \|v_h\|_h.
\]
Using the above notation the artificial viscosity finite element space
semi-discretization of \eqref{burger} reads, given $u_{0} \in C^\infty(I)$
find $u_h(t) \in V_h$ such that $(u_h(0),v_h)_I = (u_{0},v_h)_I$ and
\begin{equation}\label{art_visc_FEM}
(\partial_t u_h,v_h)_h + \left(\partial_x \frac{u_h^2}{2},v_h\right)_I + (\hat
\nu \partial_x u_h, \partial_x v_h)_I = 0,\mbox{ for all } v_h \in
V_h \mbox{ and } t>0,
\end{equation}
where we propose two different forms of $\hat \nu$:
\begin{enumerate}
\item linear artificial viscosity: 
\begin{equation}\label{lin_visc}
\hat \nu := \max( U_{0} h/2,
  \nu);
\end{equation}
\item nonlinear artificial viscosity: \\
Let $0\leq\epsilon$ and
\begin{equation}\label{nu0}
\nu_0(u_h)\vert_{I_i}:= \frac{1}{2} \|u_h\|_{L^\infty(I_i)} \max_{x \in \{x_i,x_{i+1}\}}
\frac{|\jump{\partial_x u_h}\vert_{x}|}{2 \{|\partial_x u_h| \}\vert_{x}+\epsilon},
\end{equation}
where $\jump{\partial_x u_h}\vert_{x_i}$ denotes the jump of $\partial_x u_h$ over the
node $x_i$ and  $\{|\partial_x u_h |\}\vert_{x_i}$ denotes the average
of $|\partial_x u_h |$ over $x_i$. If $\epsilon=0$ and $\{|\partial_x
u_h |\}\vert_{x_i}=0$
we replace the quotient $|[\partial_x u_h]\vert_{x_i}|/\{|\partial_x
u_h |\}\vert_{x_i}$ by zero.

Further let
\[
\xi(u_h)\vert_{I_i}:=\left\{
\begin{array}{ll}
1\quad\mbox{if $\partial_x u_h\vert_{I_i}>0$, $\partial_x
  u_h\vert_{I_i} > \partial_x u_h\vert_{I_{i+1}}>0$} \\ \quad \mbox{ and $\partial_x
  u_h\vert_{I_i} \ge \partial_x u_h\vert_{I_{i-1}}>0$}\\ 
0\quad \mbox{otherwise}
\end{array}
 \right.
\]
\begin{equation}\label{nu1}
\nu_1(u_h)\vert_{I_i}:= \xi(u_h)\vert_{I_i}
\frac12 (\nu_0\vert_{I_{i-1}}\frac{\partial_x
  u_h\vert_{I_{i-1}}}{\partial_x u_h\vert_{I_{i}}} +\nu_0\vert_{I_{i+1}}\frac{\partial_x u_h\vert_{I_{i+1}}}{\partial_x u_h\vert_{I_{i}}}\Bigr).
\end{equation}
Finally define:
\begin{equation}\label{nonline_visc}
\hat \nu(u_h)\vert_{I_i}:=\max(\nu, h(\nu_0\vert_{I_i}+
\nu_1\vert_{I_i})).    
\end{equation}
\end{enumerate}
The rationale for the nonlinear viscosity is to add first order
viscosity at local extrema of the solution $u_h$ so that \eqref{CMP}
holds also for the discrete solution and enough viscosity
at positive extrema of $\partial_x u_h$, making \eqref{grad_bound}
carry over to the discrete setting. The most important term is $\nu_0$,
ensuring the discrete maximum principle. The other part $\nu_1$ is
merely a correction that ensures that the viscosity at local maxima of
$\partial_x u_h$ dominates that of the surrounding elements.
 Indeed the role of the
function $\xi(u_h)$ is to act as an indicator function for the
elements where the local maxima of $\partial_x u_h$ are taken and
modify the viscosity there. By construction, if $\xi(u_h) = 1$ in one element it must
be zero in the neighbouring elements. Note that formally $\hat \nu(u_h) \approx
O(\max(U_0 h^{3/2}), \nu)$ in the smooth part of the solution, so that
in principle we can expect higher order convergence away from local
extrema. Note that the perturbation $\nu_1$ close to extrema of the
gradient of the solution has no impact on the formal order. 
The high order convergence properties and the effect of the
regularization parameter $\epsilon$ will be explored in the numerical
section.
\begin{remark}
Note that in the linear case the viscosity $\hat \nu$ may be written as 
\[
\hat \nu := \nu \max(1,Re_h), \quad \mbox{ with } Re_h := \frac{U_{0}
h}{2\nu}
\]
reflecting that in the high Reynolds number regime the viscosity is
increased artificially to be order $h$.
\end{remark}
\subsection{Existence of solution to the semidiscretized system}
For the linear method existence and uniqueness of solutions of
\eqref{art_visc_FEM}
follows using standard methods. First we observe that the nonlinear
function of the dynamical system is locally Lipschitz and then anticipating
the global upper bound \eqref{umax} we conclude that a global solution
exists and is unique.

The nonlinear method obtained when \eqref{art_visc_FEM} is used with
the viscosity \eqref{nonline_visc} results in a dynamical system with
discontinuous righthand side (even for $\epsilon>0$ the contribution
from $\nu_1$ introduces discontinuities). Existence of solutions
to \eqref{art_visc_FEM} with the nonlinear viscosity
\eqref{nonline_visc} is obtained using Filippov
theory \cite{Fi60}. Anticipating the results of the next section, we may
conclude that a solution exists, since by the discrete maximum
principle \eqref{umax}, for fixed $h$, there holds 
\[
\Bigl|\left(\partial_x \frac{u_h^2}{2},v_i\right)_I + (\hat
\nu \partial_x u_h, \partial_x v_i)_I\Bigr| \leq M \|v_i\|_{L^2(I)}
\]
and hence $|\partial_t u_h(x,t)|<M$ for all $(x,t)\in Q$. The question of uniqueness is
 more involved, but we
conjecture that the solution is forward unique since by construction
it satisfies the Oleinik E-condition. 
Indeed the decrease of the maximum derivative by construction, rules out the so called repulsive sliding mode that is known to
cause nonuniqueness of the solution. 
We will not explore these issues further here but
refer the interested reader to \cite{DL09}.
Typically in
practice the system \eqref{art_visc_FEM} will be discretized in time using an
explicit time stepping scheme which, by definition, will produce a
unique discrete solution. In the
following we will prove that any solution to \eqref{art_visc_FEM} will
satisfy certain uniform bounds and converge to the exact solution at a
certain rate.
%
%
\subsection{Maximum principles for the discrete solution}
Maximum principles give local estimates of the behavior of the
solution and they rarely carry over to the discrete method. There are 
however stabilized methods that are specially designed to make a
discrete maximum principle hold, see for instance \cite{XZ99,BE05} for linear
convection--diffusion problems and \cite{KT02,Bu07b} for maximum
principle satisfying finite element methods for conservation laws.

In \cite{Bu07b} it was shown that the discrete equivalent of \eqref{CMP},
together with energy stability of the discrete solution is sufficient to prove
the convergence of the approximation sequence to the entropy
solution. For our purposes herein however it is not sufficient, but we
also need to prove a discrete equivalent of the bound
\eqref{grad_bound} on the gradient. We collect the monotonicity
results we need in the following lemma. For clarity of the exposition we first give the proofs
in the case $\epsilon=0$ and then discuss how the regularization modifies
the bounds.
\begin{lemma}\label{DMP}
Let $u_h$ be the solution of \eqref{art_visc_FEM} either using the
linear viscosity \eqref{lin_visc}
or the nonlinear viscosity \eqref{nonline_visc} and $\epsilon=0$. Then the following
bounds hold:
\begin{equation}\label{umax}
\sup_{(x,t) \in Q} |u_h(x,t)| \leq U_0 \lesssim \max_{x \in I} |u_0(x)|,
\end{equation}
\begin{equation}\label{grad_max}
\sup_{(x,t) \in Q} \partial_x u_h(x,t) \leq \max_{x \in I} \partial_x u_h(x,0)
\lesssim \max_{x \in I}  |\partial_x u_0(x)|.
\end{equation}
\end{lemma}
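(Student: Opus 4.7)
The plan is to extract a system of nodal ODEs from \eqref{art_visc_FEM} by testing against the nodal basis $v_i$ and exploiting mass lumping, then to mimic the continuous maximum principle arguments node-by-node and element-by-element. Since $u_h$ is piecewise affine, $\sup|u_h|$ is attained at a node $x_i$ and $\sup\partial_x u_h$ is constant on an element $I_j$, so it is enough to control the maxima of $\{u_i(t)\}_i$ and $\{w_i(t)\}_i$ in time, where $u_i:=u_h(x_i,t)$ and $w_i:=\partial_x u_h|_{I_i}(t)$.

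For \eqref{umax}, I would test \eqref{art_visc_FEM} against $v_i$. Mass lumping reduces the time-derivative term to $h\dot u_i$; the convective term produces the three-point skew flux difference $\tfrac16(u_{i+1}-u_{i-1})(u_{i-1}+u_i+u_{i+1})$, which I would rewrite in terms of $A:=u_i-u_{i-1}$ and $B:=u_i-u_{i+1}$; the diffusive term yields the weighted second difference $\hat\nu|_{I_i}w_i-\hat\nu|_{I_{i-1}}w_{i-1}$. At an instant when $u_i$ attains the maximum of the nodal values one has $A,B\ge 0$, and the convective contribution is bounded in absolute value by $\tfrac{1}{2}U_0(A+B)$ provided $|u_h|\le U_0$ is already known. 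The linear choice $\hat\nu\ge\tfrac12U_{0}h$, or the identity $\hat\nu|_{I_j}\ge h\nu_0|_{I_j}=\tfrac{h}{2}\|u_h\|_{L^\infty(I_j)}$ produced by \eqref{nu0} at the extremum configuration $w_{i-1}\ge 0\ge w_i$ (where the jump-to-average ratio in \eqref{nu0} is exactly one), then yields $\dot u_i\le 0$. A continuation argument in $t$ closes the feedback between $\hat\nu(u_h)$ and $u_h$ in the nonlinear case: the first time $\max_i|u_i|$ would try to exceed $U_0$ one derives $\dot u_i\le 0$ at the offending node, contradicting growth.

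For \eqref{grad_max}, I would form the equation for $w_i$ by taking the difference $h^2\dot w_i=M_{i+1}-M_i$ of the nodal equations for $u_{i+1}$ and $u_i$. The resulting semidiscrete relation is a three-point analogue of \eqref{burger_derivative}, in which the quadratic sink $-w_i^2$ emerges from expanding the convective difference. At an element $I_i$ where $w_i$ is a strict positive local maximum of $\{w_{i-1},w_i,w_{i+1}\}$, the definition of $\xi$ in \eqref{nu1} sets $\xi(u_h)|_{I_i}=1$, switching on an additional viscosity $h\nu_1|_{I_i}$ that, by construction, dominates the viscous fluxes coming from the neighbouring elements on which only the weaker $\nu_0$ is active. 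I would verify that the resulting weighted second difference of $\hat\nu w$ at $I_i$ is nonnegative, so that together with the quadratic sink the right-hand side is $\le 0$, yielding $\dot w_i\le 0$. The global one-sided bound \eqref{grad_max} then follows by the same contradiction argument as for \eqref{umax}, noting that the initial value $\max_i w_i(0)$ is controlled by $\max_x|\partial_x u_0|$ via the stability of the $L^2$-projection on quasi-uniform meshes.

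The main obstacle will be the algebraic verification in the nonlinear case: showing that the non-smooth $\nu_0$ and $\nu_1$ deliver precisely the amount of dissipation needed at both types of extrema, and simultaneously managing the feedback through $\hat\nu(u_h)$. The role of $\nu_1$ is particularly delicate, since $\nu_0$ alone, being defined from jumps of $\partial_x u_h$, can degenerate on an element where $w_i$ is a local maximum but the adjacent jumps are small; the indicator $\xi$ and the neighbour-maximum structure in \eqref{nu1} are tailored so that the extra viscosity on $I_i$ absorbs the worst-case contribution from $I_{i\pm1}$, and this is the only place where the nonlinear argument differs substantively from the linear one.
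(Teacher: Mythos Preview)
Your plan follows the paper's approach closely: test with $v_i$ for \eqref{umax}, test with $(v_{i+1}-v_i)/h$ for \eqref{grad_max}, and argue that the time derivative at a discrete extremum has the right sign. Two points in your sketch of \eqref{grad_max} need more care before the argument closes.

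First, when you expand the convective difference you do not get only the quadratic sink. The paper's computation gives
\[
T_1=-\tfrac16 h\,w_{i-1}^2-\tfrac23 h\,w_i^2-\tfrac16 h\,w_{i+1}^2
-\tfrac12 u_h(x_i)(w_i-w_{i-1})+\tfrac12 u_h(x_{i+1})(w_i-w_{i+1}),
\]
and the last two terms are the discrete remnant of $u\,\partial_x w$, which does \emph{not} vanish at a discrete maximum of $w$. Their sign depends on the signs of $u_h(x_i),u_h(x_{i+1})$, so they must be absorbed by the viscous contribution, not by the sink. In the linear case this works because $\hat\nu=\tfrac12U_0h$ turns $T_2$ into $-\tfrac12 U_0(w_i-w_{i-1})-\tfrac12 U_0(w_i-w_{i+1})$, and one uses $U_0\pm u_h\ge0$. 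In the nonlinear case the $\nu_1$-boost makes the viscous part $\le -2\nu_0|_{I_i}w_i$, which is then paired with the bound $\tfrac12 u_h(x_{i+1})(w_i-w_{i+1})\le\nu_0|_{I_i}(w_i+w_{i+1})$ after a case split on the signs of $u_h(x_i),u_h(x_{i+1})$. Your statement that ``second difference of $\hat\nu w$ nonnegative plus quadratic sink'' suffices skips exactly this step.

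Second, restricting to a \emph{strict} positive local maximum of $\{w_{i-1},w_i,w_{i+1}\}$ leaves a gap: the maximum can be attained on a plateau $I_m,\dots,I_n$ with equal gradients. The switch $\xi$ is asymmetric (strict on the right neighbour, non-strict on the left), so on a plateau it is active only at the rightmost element. The paper treats this case separately, arguing that interior elements of the plateau have $\dot w\le0$ trivially, the right end has $\xi=1$ hence $\dot w\le0$, and the left end cannot grow without forcing $\xi=1$ on itself or its right neighbour at time $t^*+\varepsilon$. This is the ``attractive sliding mode'' configuration and cannot be subsumed under the strict-maximum argument.
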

\begin{proof}
The proof of \eqref{umax} is an immediate consequence of the fact that
the space discretization has the DMP-property introduced in
\cite{BE05}. 

For the case of linear artificial viscosity, first
assume that for some time $t^*$ there holds $\max_{x \in I} |u_h(x,t^*)| \leq
U_0$, then show that this implies 
\begin{equation}\label{maxtstar}
\max_{t\ge t*} \max_{x \in I} |u_h(x,t)| \leq
U_0
\end{equation}
 and conclude noting that the assumed inequality holds for $t^*=0$,
 since
$$\max_{x\in I} |u_h(x,0)| =:U_0.$$

First we compute
\begin{multline}\label{nonlin_int}
\int_I u_h \partial_x u_h v_i ~\mbox{d}x = \frac{h^2}{3} (\partial_x
u_h\vert_{I_{i-1}})^2 + \frac{h^2}{6} (\partial_x
u_h\vert_{I_{i}})^2 \\+ \frac{h}{2} u_h(x_{i-1},t^*) \partial_x
u_h\vert_{I_{i-1}} +\frac{h}{2} u_h(x_i,t^*) \partial_x
u_h\vert_{I_{i}}
\end{multline}
Assuming that
$u_h$ has a local max in $x_i$ at $t=t^*$ it follows that, for linear viscosity
\begin{multline*}
h \partial_t u_h(x_i,t^*) = - (u_h \partial_x u_h,v_i)_I - (\hat
\nu \partial_x u_h, \partial_x v_i)_I \\ \leq -\frac{h}{2}\left(U_{0} - \max(|u_h(x_{i-1},t^*)|,|u_h(x_i,t^*)|)\right)   (|\partial_x
u_h\vert_{I_{i-1}}|+|\partial_x u_h\vert_{I_{i}}|) \leq 0.
\end{multline*}
It follows that $\partial_t u_h(x_i,t^*) \leq 0$ and hence the local
maximum can not grow. The case of a local minimum is similar. 

For nonlinear viscosity on the form \eqref{nu0}, since at a local
maximum $|\jump{[\partial_x u_h]}| = 2 \{|\partial_x u_h\}$, we deduce that
$$\hat \nu(u_h)\vert_{I_{i}} = \|u_h(\cdot,t^*)\|_{L^\infty(I_i)} h/2 = \max(|u(x_i,t^*)|,|u(x_{i+1},t^*)|) h/2$$
and the same conclusion follows. To reduce the notation below we drop
the argument $t^*$.

We will show
\eqref{grad_max}
by first proving that the maximum gradient must be decreasing, and
then applying the stability of the
$L^2$ projection. Hence it is sufficient to prove that $\partial_t
\max_{i} \partial_x u_h\vert_{I_i} \leq 0$ in $Q$ to
conclude.

We first give the proof for the linear artificial viscosity.
We will prove that the discrete gradient is bounded by the gradient of
the discrete initial data. 
\[
\sup_{(x,t) \in Q} \partial_x u_h(x,t)  \leq \sup_{x \in
  I} \partial_x \pi_h u_0.
\]
Starting from \eqref{art_visc_FEM}, we let $I_i$ be any element where
$\partial_x u_h\vert_{I_i}$ has a local maximum, in the sense $\partial_x
u_h\vert_{I_i} \ge \partial_x u_h\vert_{I_{i\pm 1}} \ge 0$. 
\begin{multline*}
 \partial_t \partial_x u_h\vert_{I_{i}} = -\frac{1}{h} \int_{x_{i-1}}^{x_{i+2}}
u_h \partial_x u_h (v_{i+1} - v_i) ~ \mbox{d} x - \\\frac{1}{h} \int_{x_{i-1}}^{x_{i+2}}
\hat \nu \partial_x u_h \partial_x  (v_{i+1} - v_i) ~ \mbox{d} x = T_1+T_2.
\end{multline*}
Decomposing the integral $T_1$ on the contributions from $v_i$ and
$v_{i+1}$ we have using \eqref{nonlin_int} and some minor manipulations
\begin{multline*}
 T_1 = - \frac16 h (\partial_x
u_h\vert_{I_{i-1}})^2 - \frac23 h (\partial_x
u_h\vert_{I_{i}})^2 - \frac16 h (\partial_x
u_h\vert_{I_{i+1}})^2\\ - \frac12 u_h(x_i) (\partial_x
u_h\vert_{I_{i}} - \partial_x
u_h\vert_{I_{i-1}} )  - \frac12 u_h(x_{i+1}) (\partial_x
u_h\vert_{I_{i+1}} - \partial_x
u_h\vert_{I_{i}} ).
\end{multline*}
Since the derivative
takes its max value in $I_i$ we have for $T_2$
\begin{multline*}
T_2= - \frac{1}{h}\int_{x_{i-1}}^{x_{i+2}}
\hat \nu \partial_x u_h \partial_x  (v_{i+1} - v_i) ~ \mbox{d} x  \\=
h^{-1}
((\hat \nu(u_h) \partial_x u_h)\vert_{I_{i-1}}-2 (\hat \nu(u_h) \partial_x
u_h)\vert_{I_{i}}+ (\hat \nu(u_h) \partial_x u_h\vert_{I_{i+1}})) \leq 0.
\end{multline*}
Collecting the above expressions and using that $\hat \nu \ge \tfrac12 U_{0} h$, and  we obtain 
\begin{multline*}
 T_1+T_2 \leq -\frac16 h (\partial_x
 u_h\vert_{I_{i+1}})^2  -\frac23 h (\partial_x
 u_h\vert_{I_{i}} )^2- \frac{1}{6} h (\partial_x u_h\vert_{I_{i-1}})^2
 \\
 - \frac12 \underbrace{ (U_0 + u_h(x_i))}_{\ge 0}  \underbrace{(\partial_x
u_h\vert_{I_{i}} - \partial_x
u_h\vert_{I_{i-1}} )}_{\ge 0} \\
 - \frac12 \underbrace{ ( U_0 - u_h(x_{i+1}))}_{\ge 0} \underbrace{(\partial_x
u_h\vert_{I_{i}} - \partial_x
u_h\vert_{I_{i+1}} )}_{\ge 0}
\leq 0.
\end{multline*}
This proves that $\partial_t \max_{i} \partial_x u_h\vert_{I_i} \leq
0$ and therefore the maximum
space derivative is alway decreasing. The global upper bound \eqref{grad_max}
is immediate by the stability of the $L^2$-projection. Note that by
the non-monotonicity of the $L^2$-projection there is now an absolute
value on the derivative of the initial data.

In the case of the nonlinear viscosity given by \eqref{nonline_visc} we
first show that the time derivative of the gradient must be negative
in cells with $\xi(u_h)=1$. Then we show that any adjacent element,
cannot grow either due to the design of the nonlinear switch. 
In this case have after integration
\begin{multline}\label{nonlin}
 T_1+T_2 = -\frac16 h (\partial_x
 u_h\vert_{I_{i+1}})^2  -\frac23 h (\partial_x
 u_h\vert_{I_{i}} )^2- \frac{1}{6} h (\partial_x u_h\vert_{I_{i-1}})^2
 \\
+ h^{-1}
((\hat \nu(u_h) \partial_x u_h)\vert_{I_{i-1}}-2 (\hat \nu(u_h) \partial_x
u_h)\vert_{I_{i}}+ (\hat \nu(u_h) \partial_x u_h\vert_{I_{i+1}})) \\
- \frac12 u_h(x_i)(\partial_x
u_h\vert_{I_{i}} - \partial_x
u_h\vert_{I_{i-1}} ) + \frac12  u_h(x_{i+1})(\partial_x
u_h\vert_{I_{i}} - \partial_x
u_h\vert_{I_{i+1}} ).
\end{multline}
%
First observe that the case where either $x_{i}$ or $x_{i+1}$ is a
local extremum can be excluded, since then $\nu_0\vert_{I_i} =
\tfrac12 \|u_h\|_{L^{\infty}(I)}$ and 
by observing the sign of the
contribution of the derivative from the neighbouring cells in the
viscosity terms of line two and three of \eqref{nonlin}. The other
terms are controlled as in the linear theory with minor modifications.

Since the gradient has a local max in $I_i$, in the sense that $\xi(u_h)\vert_{I_i}=1$ there holds
$$
\hat\nu(u_h)\vert_{I_i} = \frac12 (\nu_0(u_h)\vert_{I_{i-1}}\frac{\partial_x u_h\vert_{I_{i-1}}}{\partial_x u_h\vert_{I_{i}}}+\nu_0(u_h)\vert_{I_{i+1}}\frac{\partial_x u_h\vert_{I_{i+1}}}{\partial_x u_h\vert_{I_{i}}})+\nu_0(u_h)\vert_{I_{i}}
$$  
and since by construction $\xi(u_h)\vert_{I_{i \pm 1 }}=0$, we have in
the neighbouring cells,
$$
\hat\nu(u_h)\vert_{I_{i \pm 1}} = \nu_0(u_h)\vert_{I_{i \pm 1}}.
$$
Using the values of $\hat \nu$ the contribution from the viscous part of the
differential operator may be bounded as
 \begin{multline}\label{bound_art_osc}
(\hat \nu(u_h) \partial_x u_h)\vert_{I_{i-1}}-2 (\hat \nu(u_h) \partial_x
u_h)\vert_{I_{i}}+ (\hat \nu(u_h) \partial_x u_h\vert_{I_{i+1}}) = - 2
h \nu_0(u_h)\vert_{I_i} \partial_x
 u_h\vert_{I_i}
.
\end{multline}
For the two last terms in the right hand side of \eqref{nonlin} we note
that, assuming first $u_h(x_{i+1}) \ge u_h(x_i)\ge 0$,
\begin{multline}\label{bound_phys_osc}
- \frac12 \underbrace{u_h(x_i)(\partial_x
u_h\vert_{I_{i}} - \partial_x
u_h\vert_{I_{i-1}} )}_{\ge 0}+ \frac12  u_h(x_{i+1})(\partial_x
u_h\vert_{I_{i}} - \partial_x
u_h\vert_{I_{i+1}} ) \\ 
\leq  \nu_0(u_h)\vert_{I_i}
(|\partial_x u_h\vert_{I_i}|+|\partial_x u_h\vert_{I_{i+1}}|)
\end{multline}
Collecting \eqref{bound_art_osc} and \eqref{bound_phys_osc},
\eqref{nonlin} can be upper bounded in the following fashion,
recalling that $\partial_x u_h\vert_{I_i} > \partial_x u_h\vert_{I_{i+1}}>0$:
\begin{multline}
 T_1+T_2 \leq -\frac16 h (\partial_x
 u_h\vert_{I_{i+1}})^2  -\frac23 h (\partial_x
 u_h\vert_{I_{i}} )^2- \frac{1}{6} h (\partial_x u_h\vert_{I_{i-1}})^2
 \\
- 2 \nu_0(u_h)\vert_{I_i} \partial_x
u_h\vert_{I_i}
+\nu_0(u_h)\vert_{I_i}
(|\partial_x u_h\vert_{I_i}|+|\partial_x u_h\vert_{I_{i+1}}|) \leq 0.
\end{multline}
The case $0\ge u_h(x_{i+1}) \ge u_h(x_i)$ is similar observing that in
that case the last term in the right hand side of \eqref{nonlin} is
negative. In case $u_h(x_i) < 0 < u_h(x_{i+1})$ we observe that only
the treatment of the first contribution of the last line of \eqref{nonlin} must be modified. We
note that
\[
- \frac12 u_h(x_i)(\partial_x
u_h\vert_{I_{i}} - \partial_x
u_h\vert_{I_{i-1}} ) \\
\leq \frac12 h (\partial_x
u_h\vert_{I_{i}})^2 
\]
and that this term is cancelled by the second term of the right hand
side in the first line of \eqref{nonlin}.

Finally we must check that the gradient can not increase in any
portion of the domain where the gradient is constant at the maximum
value over several
elements from $I_m$ to $I_n$, $m<n$, at some time $t^*$. First note that for all elements
$I_{m+2},...,I_{n-2}$ the derivative is decreasing, since only the
first three terms of the right hand side of \eqref{nonlin} are non-zero. 
By construction $\xi(u_h)\vert_{I_n}=1$ and hence the derivative is
decreasing in $I_n$. As a consequence the derivative in $I_{n-1}$ is
either decreasing at the time $t^*$ or will have
$\xi\vert_{I_{n-1}}=1$ at $t^*+\varepsilon$ for all $\varepsilon>0$
and hence be non-increasing.
Similarly for $I_{m}$ and $I_{m+1}$, the derivatives can not grow at
the same rate in both cells since then $\xi\vert_{I_{m+1}}=1$ at
$t^*+\varepsilon$ for all $\varepsilon>0$, since its right hand side
neighbour has decreasing space derivative. If, on the other hand the time derivative
of the derivative is the largest in $I_m$ at time $t=t^*$ then $\xi\vert_{I_{m}}=1$ at
$t^*+\varepsilon$ for all $\varepsilon>0$, and hence the derivative
can not grow. This case of several adjacent cells over which the
gradient is constant is what can give rise to the so called attractive
sliding mode in the Filippov theory.
\end{proof}
\begin{remark} First observe that if $\nu_1=0$ it is not difficult to
  find $u_h$ for which $\partial_t \max_i \partial_x u_h
  \vert_{I_i}>0$, so the above technique of proof requires the
  contribution from $\nu_1$, whether or not it is really necessary in
  practice remains unclear. 
\end{remark}

It was shown in \cite{Bu07b} that a consequence of the bound
\eqref{DMP} is that the total variation of $u_h$ diminuishes. We
recall the result without proof.
\begin{corollary}\label{TVbound}
Let $u_h$ be the solution of \eqref{art_visc_FEM}, then there holds,
for all $t\ge0$
\[
TV(u_h(\cdot,t)):= \int_I |\partial_x u_h(\cdot,t)| ~\mbox{d}x \leq TV(u_h(\cdot,0)).
\]
\end{corollary}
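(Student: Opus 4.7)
The plan is to exploit the fact that for a continuous piecewise-affine periodic function, the total variation is a polyhedral functional of the nodal values, which combines neatly with the pointwise bounds on $\partial_t u_h(x_i,t)$ supplied by the discrete maximum principle. First I write
\[
TV(u_h(\cdot,t)) = \sum_{i=0}^{N-1} |u_h(x_{i+1},t) - u_h(x_i,t)| = \sum_{i=0}^{N-1} s_i(t)\,(u_h(x_{i+1},t) - u_h(x_i,t)),
\]
where $s_i(t) := \mathrm{sign}(u_h(x_{i+1},t) - u_h(x_i,t))$. Applying summation by parts cyclically, thanks to periodicity, gives
\[
TV(u_h(\cdot,t)) = \sum_{i=0}^{N-1} (s_{i-1}(t) - s_i(t))\, u_h(x_i,t) =: \sum_{i=0}^{N-1} \sigma_i(t)\, u_h(x_i,t),
\]
and the coefficient $\sigma_i(t)$ equals $+2$ when $x_i$ is a strict local maximum of $u_h(\cdot,t)$, $-2$ at strict local minima, and $0$ at nodes through which $u_h$ is strictly monotone.

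On any open time interval during which no pair of adjacent nodal values coincides, the $\sigma_i(t)$ are locally constant, so termwise differentiation yields
\[
\frac{d}{dt} TV(u_h(\cdot,t)) = \sum_{i=0}^{N-1} \sigma_i \, \partial_t u_h(x_i,t).
\]
At each strict local maximum, the computation used to prove \eqref{umax} in Lemma \ref{DMP} actually delivers the sharper local statement $\partial_t u_h(x_i,t) \leq 0$: the pointwise bound
$h\partial_t u_h(x_i,t) \leq -\tfrac{h}{2}(U_0 - \max(|u_h(x_{i-1})|,|u_h(x_i)|))(|\partial_x u_h\vert_{I_{i-1}}|+|\partial_x u_h\vert_{I_i}|)$
is non-positive thanks to \eqref{umax}, and uses only that $x_i$ is a discrete local maximum, not the global one. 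The mirror argument at local minima gives $\partial_t u_h(x_i,t) \geq 0$. In both cases $\sigma_i \partial_t u_h(x_i,t) \leq 0$, so the whole sum is non-positive and $TV(u_h(\cdot,t))$ is non-increasing on every such interval. The argument applies uniformly to both choices of $\hat\nu$ covered by Lemma \ref{DMP}, since in each case the viscous contribution at a discrete extremum dominates the convective one.

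The main technical point is to treat the exceptional times at which some $s_i(t)$ is discontinuous, i.e.\ when $u_h(x_{i+1},t)=u_h(x_i,t)$ and an edge of $u_h$ flips orientation or a plateau forms or dissolves. Between two consecutive such times $TV(u_h(\cdot,t))$ is non-increasing by the argument above, and since $t\mapsto TV(u_h(\cdot,t))$ is Lipschitz continuous (being a finite sum of absolute values of $C^1$ functions of $t$ in the linear-viscosity case, and of Filippov trajectories with uniformly bounded velocity in the nonlinear case, as already established when asserting existence of $u_h$), the monotonicity extends across these exceptional times by continuity. An equivalent, more uniform formulation is to work with the Dini upper right-derivative of $TV$ and to choose a subgradient of $|\cdot|$ at zero on each plateau edge so that the summation by parts above produces the same non-positive bound; this bypasses any case analysis on the Filippov selection for the nonlinear scheme. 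Combining with $TV(u_h(\cdot,0))$ as initial datum yields the claimed bound.
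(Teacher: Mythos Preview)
Your argument is correct and is the standard route from a discrete maximum principle to TVD: summation by parts reduces $\tfrac{d}{dt}TV$ to a signed sum over local extrema, and the local computation in the proof of \eqref{umax} (which only uses that $x_i$ is a \emph{local} extremum together with the already-established global bound \eqref{umax}) gives the required sign at each extremal node; continuity of $TV$ in $t$ then patches across the degenerate instants. The paper itself does not prove the corollary here---it explicitly recalls the result without proof from \cite{Bu07b}---so there is nothing to compare against in the present text; your write-up is precisely the argument one expects behind that citation.
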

\subsubsection{The effect of non-zero regularization parameter
  $\epsilon$}
In practice it may be practical to use a value on $\epsilon$ that is
related to the mesh size, in particular if implicit solvers are used
it is known that the regularized shock-capturing term has smoother
convergence properties. This will result in a modification of the
upper bounds \eqref{umax} and \eqref{grad_max}, but as we show below,
the maximum principles can only be violated by an $\mathcal{O}(\epsilon)$. 
\begin{proposition}\label{reg_pert_bounds}
Let $0<\epsilon T < 1$  in \eqref{nu0}, let $u_h$ be the
solution of \eqref{art_visc_FEM}, then there holds
\begin{equation}\label{reg_bound}
\|u_h(\cdot,T)\|_{L^\infty(I)} \leq (1 + \epsilon T) U_0
\end{equation}
and
\begin{equation}\label{grad_reg_bound}
\max_{(x,t) \in Q} \partial_x u_h(x,t) \leq \max_{x \in I} \partial_x u_h(x,0)+  U_0 (1+\epsilon T)
\epsilon T.
\end{equation}
\end{proposition}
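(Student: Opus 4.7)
The plan is to revisit the proofs of the maximum principles \eqref{umax} and \eqref{grad_max} in Lemma \ref{DMP} and to quantify the loss of viscosity incurred by taking $\epsilon>0$ in \eqref{nu0}. Writing $\nu_0^{(\epsilon)}$ for the value defined with regularization $\epsilon$ and $\nu_0^{(0)}$ for its analogue at $\epsilon=0$, a direct comparison at each interior node $x_j$ gives
\[
0\leq \nu_0^{(0)}\vert_{I_i}-\nu_0^{(\epsilon)}\vert_{I_i} = \tfrac12\|u_h\|_{L^{\infty}(I_i)}\cdot \frac{|[\partial_x u_h]\vert_{x_j}|\,\epsilon}{(2\{|\partial_x u_h|\}\vert_{x_j}+\epsilon)\cdot 2\{|\partial_x u_h|\}\vert_{x_j}}.
\]
Since $|[\partial_x u_h]|\leq 2\{|\partial_x u_h|\}$ and every relevant $|\partial_x u_h|$-factor that will multiply this deficit is itself bounded by $2\{|\partial_x u_h|\}\vert_{x_j}$, the net contribution of the deficit to any viscous flux term is bounded by a fixed multiple of $\epsilon\|u_h\|_{L^{\infty}}$. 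Since $\nu_1$ is built from $\nu_0$ on neighbouring cells, an identical bound controls the $\nu_1$ deficit.

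To establish \eqref{reg_bound}, set $M(t):=\|u_h(\cdot,t)\|_{L^{\infty}}$ and suppose $M(t^*)$ is attained at a node $x_i$. On the two flanking elements $\xi(u_h)=0$ (since $\partial_x u_h$ changes sign across $x_i$), so $\nu_1$ vanishes and the analysis of Lemma \ref{DMP} applies verbatim apart from the viscosity deficit above. This yields
\[
\partial_t u_h(x_i,t^*) \leq \tfrac{\epsilon}{2}\,M(t^*),
\]
and an envelope/Gronwall argument on the continuous piecewise smooth function $M$ gives $M(T)\leq U_0\,e^{\epsilon T/2}$. Since $e^{x/2}-1\leq x$ on $[0,1]$, this collapses to $M(T)\leq (1+\epsilon T)U_0$, which is \eqref{reg_bound}.

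For \eqref{grad_reg_bound} let $G(t):=\max_i \partial_x u_h\vert_{I_i}(t)$, attained in an element $I_i$ with $\xi(u_h)\vert_{I_i}=1$. Reusing the nonlinear branch of the proof of Lemma \ref{DMP} while replacing the viscous bound \eqref{bound_art_osc} by its $\epsilon$-perturbed counterpart, each occurrence of $\nu_0$ is diminished by the deficit above while the $\nu_1$ contributions in the neighbouring cells lose at most $O(\epsilon)$ viscosity as well. Using $\partial_x u_h\vert_{I_i}\leq 2\{|\partial_x u_h|\}\vert_{x_i}$ to absorb the gradient factor against the denominator, the resulting excess obeys $\partial_t G(t)\leq \epsilon M(t)$. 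Combining with $M(t)\leq (1+\epsilon t)U_0$ from the previous step and integrating in time gives
\[
G(T)\leq G(0)+\int_0^T \epsilon(1+\epsilon t)U_0\,dt \leq \max_{x\in I}\partial_x u_h(x,0) + U_0(1+\epsilon T)\epsilon T,
\]
which is exactly \eqref{grad_reg_bound}. The main obstacle I anticipate is not the estimate itself but the bookkeeping of sign cancellations: in the proof of Lemma \ref{DMP} the identities at \eqref{nonlin}, \eqref{bound_art_osc}, and \eqref{bound_phys_osc} rely on a precise case analysis on $\xi$ and on the signs of the neighbouring gradients, and each of these must be verified to survive the $O(\epsilon)$ perturbation, including in the attractive sliding-mode configuration discussed after Lemma \ref{DMP}.
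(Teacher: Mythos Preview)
Your proposal is correct and follows essentially the same strategy as the paper: quantify the viscosity loss from $\epsilon>0$, obtain $\partial_t\|u_h\|_{L^\infty}\leq\tfrac{\epsilon}{2}\|u_h\|_{L^\infty}$ and apply Gronwall for \eqref{reg_bound}, then feed this into the gradient argument to get $\partial_t G\lesssim\epsilon\|u_h\|_{L^\infty}$ and integrate for \eqref{grad_reg_bound}. The only organisational difference is that you attribute the $\epsilon$-perturbation to the viscous estimate \eqref{bound_art_osc} via your deficit $\nu_0^{(0)}-\nu_0^{(\epsilon)}$, whereas the paper observes that \eqref{bound_art_osc} holds verbatim with $\nu_0^{(\epsilon)}$ and that the extra $+\epsilon$ appears instead in the transport bound \eqref{bound_phys_osc} (becoming \eqref{bound_phys_reg}); both bookkeepings lead to the same excess term $\tfrac12\|u_h\|_{L^\infty}\epsilon$ up to a harmless factor of two, and note that $\nu_1$ vanishes in the neighbouring cells since $\xi=0$ there, so no separate $\nu_1$-deficit needs tracking.
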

\begin{proof}
Assume for simplicity that $u_h \ge 0$
and that $u_h$ takes a global (positive) maximum in $x_i$ that will grow with the maximum
rate throughout the computation. Introduce the notation
$$g_1:=|\partial_x u_h\vert_{I_{i-1}}|,\quad
g_2:=|\partial_x u_h\vert_{I_{i}}|.$$ Recall that
at a local maximum of $u_h$, $\xi(u_h)=0$ and therefore $\hat \nu(u_h)
= \max(\nu,\nu_0(u_h))$. Assume that the maximum is taken for
$\nu_0(u_h)$. Then by \eqref{nonlin_int}
\begin{multline*}
\partial_t u_h(x_i) \leq \frac12 u_h(x_i) (g_1+g_2) -
h^{-1} \hat \nu_h(u_h)\vert_{I_{i-1}}  g_1 - h^{-1} \hat \nu_h(u_h)\vert_{I_{i}} g_2) \\
 \leq \frac12 u_h(x_i) (g_1+g_2 -
 \frac{(g_1+g_2)^2}{g_1+g_2+\epsilon}) \leq \frac12
u_h(x_i) \epsilon.
\end{multline*}
By Gronwall's lemma it follows that
\[
u_h(x_i,T) \leq U_0 e^{\frac12 \epsilon T}.
\]
Since $e^x < 1 + x/(1-x)$ we conclude, using the assumption that $\epsilon
T < 1$,
\[
u_h(x_i,T) \leq U_0(1+\epsilon T).
\]
To obtain the inequality \eqref{grad_reg_bound} we reason in a similar
fashion starting from the equation \eqref{nonlin} and using
\eqref{reg_bound}. The regularization only comes into effect at the
step \eqref{bound_phys_osc} and we observe that in this case
\begin{multline}\label{bound_phys_reg}
- \frac12 u_h(x_i)(\partial_x
u_h\vert_{I_{i}} - \partial_x
u_h\vert_{I_{i-1}} ) + \frac12  u_h(x_{i+1})(\partial_x
u_h\vert_{I_{i}} - \partial_x
u_h\vert_{I_{i+1}} ) \\ 
\leq \nu_0(u_h)\vert_{I_i}
(|\partial_x u_h \vert_{I_i}|+|\partial_x u_h \vert_{I_{i+1}}|+\epsilon).
\end{multline}
This then leads to the bound
\[
\partial_t \partial_x u_h\vert_{I_i} \leq -\frac16 h (\partial_x
 u_h\vert_{I_{i+1}})^2  -\frac23 h (\partial_x
 u_h\vert_{I_{i}} )^2- \frac{1}{6} h (\partial_x u_h\vert_{I_{i-1}})^2
 + \frac12 u_h(x_{i+1}) \epsilon.
\]
Integrating in time shows that
\[
\max_{(x,t) \in Q} \partial_x u_h \leq \max_{x \in I} \partial_x u_h(x,0) + U_0
(1+\epsilon T) \epsilon T.
\]
\end{proof}\\
A regularization of $\xi_h$ can also be performed and analysed with
similar outcome for the estimate \eqref{grad_max}.
These perturbations of the discrete maximum principle then modifies
the result \eqref{TVbound}. Assuming that the maximum violation takes
place in every node in the mesh it is straightforward to show that the
total variation remains upper bounded uniformly in $h$ and with linear
growth in $T$, provided
$\epsilon \leq O(h)$.
\subsection{Energy stability}
Our estimates rely on stability of the numerical scheme and regularity
of the dual perturbation equation. We need to control certain Sobolev
norms of the discrete solution in energy type estimates similar to
that of the continuous problem. The proof of the below estimates can
be simplified in the linear case and the inverse estimate on the
$L^\infty$-norm that is only valid in one dimension can then be
avoided. Here we only give the proof valid both in the linear and in
the nonlinear case.
\begin{lemma}\label{discrete_stab}
The solution $u_h$ of the formulation \eqref{art_visc_FEM} with either
the linear artificial viscosity given by \eqref{lin_visc} or the
nonlinear one of \eqref{nonline_visc} with $\epsilon=0$, satisfies the upper bounds
\begin{equation}\label{energy_est}
\|u_h(T)\| + \|\hat \nu^{\frac12} \partial_x
u_h\|_Q  \lesssim  \|u_0\|
\end{equation}
\begin{equation}\label{timeder_stab}
\|\partial_t u_h\|_Q\lesssim  (U_0 T^{\frac12} h^{-\frac12}+\nu^\frac12)\| \partial_x
u_0\|.
\end{equation}
\end{lemma}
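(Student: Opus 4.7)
The plan is to establish both bounds by energy arguments: test (\ref{art_visc_FEM}) with $v_h=u_h$ for (\ref{energy_est}) and with $v_h=\partial_t u_h$ for (\ref{timeder_stab}). The nonroutine ingredients are periodicity, the discrete maximum principles from Lemma \ref{DMP}, and the TV bound from Corollary \ref{TVbound}. I aim at a unified argument valid for both the linear and the nonlinear artificial viscosities, as the text indicates.

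For (\ref{energy_est}), testing with $v_h=u_h$, the mass-lumped time-derivative term gives $\tfrac12\tfrac{d}{dt}\|u_h\|_h^2$; the convective form vanishes because
\[
(\partial_x(u_h^2/2),u_h)_I = \int_I u_h^2\,\partial_x u_h\,dx = \tfrac13\int_I \partial_x(u_h^3)\,dx = 0,
\]
the antiderivative $u_h^3/3$ being continuous and $I$-periodic; and the stabilization contributes $\|\hat\nu^{1/2}\partial_x u_h\|^2$. Integrating on $(0,T)$, using norm equivalence $\|\cdot\|_h\sim\|\cdot\|$ on $V_h$ and the $L^2$-stability of $\pi_h$ on $u_h(0)=\pi_h u_0$, yields (\ref{energy_est}).

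For (\ref{timeder_stab}), testing with $v_h=\partial_t u_h\in V_h$ produces
\[
\|\partial_t u_h\|_h^2 + (u_h\partial_x u_h,\partial_t u_h)_I + (\hat\nu\partial_x u_h,\partial_x\partial_t u_h)_I = 0.
\]
The convective contribution is bounded using $\|u_h\|_{L^\infty}\leq U_0$ from (\ref{umax}), then Cauchy--Schwarz in space-time and Young's inequality. For the viscous contribution, I integrate by parts in time, producing the boundary term $\tfrac12\|\hat\nu^{1/2}\partial_x u_h\|^2\big|_0^T$; in the nonlinear case an additional term $\tfrac12\int_0^T(\partial_t\hat\nu,(\partial_x u_h)^2)\,dt$ appears and must be absorbed using the structural bounds $\nu_0,\nu_1\lesssim U_0$ inherited from Lemma \ref{DMP}. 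Absorbing on the left and bounding $\|\hat\nu^{1/2}\partial_x u_h(0)\|^2\lesssim(U_0 h+\nu)\|\partial_x u_0\|^2$ via $\hat\nu\leq\max(U_0 h,\nu)$ and $H^1$-stability of $\pi_h$, I arrive at
\[
\|\partial_t u_h\|_Q^2 \lesssim (U_0 h+\nu)\|\partial_x u_0\|^2 + U_0^2\|\partial_x u_h\|_Q^2.
\]

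The decisive step is a uniform-in-time bound $\|\partial_x u_h(t)\|\lesssim h^{-1/2}\|\partial_x u_0\|$. This follows by combining the TV-diminishing property $TV(u_h(t))\leq TV(u_h(0))\lesssim\|\partial_x u_0\|$ from Corollary \ref{TVbound} with the one-dimensional inverse estimate $\|w\|_{L^\infty(I)}\leq Ch^{-1/2}\|w\|_{L^2(I)}$ valid for piecewise-constant $w$:
\[
\|\partial_x u_h(t)\|^2 \leq \|\partial_x u_h(t)\|_{L^\infty}\,\|\partial_x u_h(t)\|_{L^1} \leq Ch^{-1/2}\|\partial_x u_h(t)\|\cdot TV(u_h(0)) \lesssim h^{-1/2}\|\partial_x u_h(t)\|\,\|\partial_x u_0\|.
\]
Time integration gives $\|\partial_x u_h\|_Q^2\lesssim Th^{-1}\|\partial_x u_0\|^2$; substituting above, taking a square root and absorbing $U_0 h$ into $U_0^2Th^{-1}$ under the mild assumption $h\leq U_0 T$, produces (\ref{timeder_stab}). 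The main obstacle is the $\partial_t\hat\nu$ contribution in the nonlinear case, which forces the use of the 1D inverse estimate and of the elementwise structural bounds on $\hat\nu(u_h)$; this is why the argument is phrased uniformly rather than separately for the two viscosities.
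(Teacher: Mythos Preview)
Your argument for \eqref{energy_est} is correct and coincides with the paper's. For \eqref{timeder_stab}, your treatment of the convective term is a legitimate variant: the paper instead places $\partial_x u_h$ in $L^1$ (invoking the TV bound directly) and $\partial_t u_h$ in $L^\infty$, then uses the 1D inverse inequality on $\partial_t u_h$; you put the inverse inequality on $\partial_x u_h$ via $\|\partial_x u_h\|\le h^{-1/2}TV(u_h)$. Both routes arrive at the same contribution $U_0 T^{1/2}h^{-1/2}\|\partial_x u_0\|$.

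The genuine gap is in your handling of the viscous term for the nonlinear viscosity. Integrating $(\hat\nu\,\partial_x u_h,\partial_x\partial_t u_h)_I$ by parts in time produces the term $\tfrac12\int_0^T\!\int_I(\partial_t\hat\nu)(\partial_x u_h)^2$, but $\partial_t\hat\nu$ is not available: as the paper itself stresses, the switch $\xi(u_h)$ in $\nu_1$ is an indicator function, so $\hat\nu(u_h)$ is discontinuous in time and the system \eqref{art_visc_FEM} is treated in the Filippov sense. Even away from the jumps of $\xi$, the structural bounds $\nu_0,\nu_1\lesssim U_0$ control only the \emph{size} of $\hat\nu$, not its time derivative, so your proposed absorption has no quantitative content.

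The paper sidesteps this by splitting $\hat\nu=\nu+\max(0,\hat\nu-\nu)$. The constant part $\nu$ integrates exactly in time to $\tfrac12\|\nu^{1/2}\partial_x u_h\|^2\big|_0^T$, the final-time contribution having the good sign. For the artificial part one uses only the pointwise bound $\max(0,\hat\nu-\nu)\lesssim U_0 h$ and the H\"older pairing
\[
\bigl(\max(0,\hat\nu-\nu)\,\partial_x u_h,\partial_x\partial_t u_h\bigr)_I
\le TV(u_h)\,\|\hat\nu\,\partial_x\partial_t u_h\|_{L^\infty(I)}
\lesssim \|\partial_x u_0\|\,U_0 h\cdot h^{-3/2}\|\partial_t u_h\|,
\]
again the TV bound plus inverse inequalities. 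No time differentiation of $\hat\nu$ is ever needed. If you replace your integration-by-parts step by this splitting, the rest of your argument goes through (and the spurious $U_0 h$ term, which forced your side assumption $h\le U_0 T$, does not appear).
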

\begin{proof}
The estimate \eqref{energy_est} is immediate by taking $v_h = u_h$
and noticing, by integration by parts and the periodic boundary
conditions, that the nonlinear transport term vanishes. By norm equivalence and
the stability of the $L^2$-projection $$\|u_h(T)\| \lesssim \|u_h(T)\|_h
\mbox{ and } \|u_h(0)\|_h \lesssim \|u_0\|.$$ 

The second estimate follows by taking $v_h = \partial_t u_h$ to obtain
\begin{equation}\label{time_stab1}
\int_0^T \|\partial_t u_h\|_h^2 ~\mbox{d}t =- \int_0^T (u_h\partial_x u_h, \partial_t
u_h)_I ~\mbox{d}t -   \int_0^T (\hat \nu \partial_x
u_h, \partial_x \partial_t u_h)_I ~\mbox{d}t.
\end{equation}
First note that by Corollary \ref{TVbound} and \eqref{umax} we have,
since $TV(u_h) \leq \mbox{meas}(I)^{\frac12} \|\partial_x u_h\|$,
\begin{multline}\label{time_transp_bound}
\int_0^T (u_h\partial_x u_h, \partial_t
u_h)_I ~\mbox{d}t \leq U_0 TV(u_h(\cdot,0)) \int_0^T \|\partial_t
u_h(\cdot,t)\|_{L^\infty(I)} ~\mbox{d}t \\
\lesssim U_0 \| \partial_x
u_h(\cdot,0)\| T^{\frac12} h^{-\frac12}  \|\partial_t
u_h\|_{Q}.
\end{multline}
For the last term in the right hand side of \eqref{time_stab1} we
observe that,
\begin{multline}\label{time_diff_bound}
\int_0^T (\hat \nu \partial_x
u_h, \partial_x \partial_t u_h)_I ~\mbox{d}t \\= \int_0^T (\max(0,\hat \nu-\nu) \partial_x
u_h, \partial_x \partial_t u_h)_I ~\mbox{d}t + \int_0^T (\nu \partial_x
u_h, \partial_x \partial_t u_h)_I ~\mbox{d}t\\
\leq TV(u_h(\cdot,0)) \int_0^T \|\hat \nu \partial_x \partial_t
u_h\|_{L^\infty(I)} ~\mbox{d}t
+\frac12 \|\nu \partial_x u_h(\cdot,T)\|^2  - \frac12 \|\nu \partial_x
u_h(\cdot,0)\|^2\\
\lesssim U_0 \| \partial_x
u_h(\cdot,0)\| h^{-\frac12} T^{\frac12} \|\partial_t u_h\|_{L^2(Q)} \\+\frac12 \|\nu^\frac12 \partial_x u_h(\cdot,T)\|^2  - \frac12 \|\nu^\frac12 \partial_x
u_h(\cdot,0)\|^2.
\end{multline}
Hence by applying \eqref{time_transp_bound} and \eqref{time_diff_bound} in
the right hand side of \eqref{time_stab1} and norm equivalence in the
left hand side of \eqref{time_stab1} we obtain the bound
\begin{multline*}
\|\partial_t u_h\|^2_Q\leq C_q \int_0^T \|\partial_t u_h\|_h^2
~\mbox{d}t \\
\leq C_q^2 2 U_0^2  \| \partial_x
u_h(\cdot,0)\|^2 T h^{-1}+ \frac{1}{2}\|\partial_t
u_h\|^2_{L^2(Q)}+ C_q^2\frac{\nu }{2} \| \partial_x
u_h(\cdot,0)\|^2.
\end{multline*}
The conclusion is immediate.
\end{proof}

\section{The linearized dual adjoint}
We introduce the linearized adjoint problem
\begin{equation}\label{adjoint_pert_equation}
\begin{array}{rcl}
-\partial_t \varphi - a(u,u_h) \partial_x\varphi -
\nu \partial_{xx} \varphi &=& 0 \mbox{
  in } Q, \\[3mm]
\varphi(0,t) &=& \varphi(1,t) \mbox{
  for } t \in (0,T],\\[3mm]
\\[3mm]
\partial_x \varphi(0,t) &=& \partial_x \varphi(1,t) \mbox{
  for } t \in (0,T],\\[3mm]
\varphi(x,T) &=&  \psi(x) \mbox{
  for } x \in I,
\end{array}
\end{equation}
where $a(u,u_h):= (u+u_h)/2$.
The rationale for the dual adjoint is the following derivation of a
perturbation equation for the functional of the error
$|(e(T),\psi)_I|$, where $e(T):= u(T)-u_h(T)$.
\begin{multline}\label{error_rep}
|(e(T),\psi)_I| = |(e(T),\psi)_I+\int_0^T (e,-\partial_t \varphi - a(u,u_h) \partial_x\varphi -
\nu \partial_{xx} \varphi)_I ~\mbox{d}t| \\
= |(e(0), \varphi(0))_I +  \int_0^T (\partial_t e+ \partial_x
(a(u,u_h)  e),\varphi)_I~\mbox{d}t + \int_0^T (\nu \partial_x e, \partial_x \varphi)_I~\mbox{d}t|\\
= |(e(0), \varphi(0))_I- \int_0^T (\partial_t u_h + u_h \partial_x u_h, \varphi)_I~\mbox{d}t - \int_0^T (\nu \partial_x
u_h, \partial_x \varphi)_I~\mbox{d}t|.
\end{multline}
This relation connects the error to the computational residual
weighted with the solution to the adjoint problem and can lead both to
a posteriori error estimates and to a priori error estimates, provided
we have sufficient information on the stability properties of the
numerical discretization methods and of the dual problem. The
a posteriori error estimate uses techniques similar to the now
classical dual weighted residual method, however in our case we can
estimate the dual weights analytically, accounting for perturbations,
both in the
discrete and the continuous solution.
Combining the a posteriori bounds with strong
stability properties of the numerical method, leads to a priori
upper bounds of the a posteriori quantities, showing that these
must converge and in consequence that the error goes to
zero. Before proceeding with this analysis we derive an a priori
estimate for the derivatives of the dual adjoint \eqref{adjoint_pert_equation}. 
\subsection{Wellposedness and stability}
Since $a(u,u_h) \in W^{1,\infty}(I)$ the problem \eqref{adjoint_pert_equation}
has a unique solution and one may show that it satisfies the maximum
principle
\[
\max_{(x,t) \in Q} |\varphi(x,t)| \leq \max_{x \in I} |\psi(x)|.
\]
The following stability estimate follows easily by standard energy methods
\begin{lemma}\label{adjoint_stab}
Let $\varphi$ be the solution to \eqref{adjoint_pert_equation} then
there holds
\begin{equation}\label{adjoint_stability}
\sup_{t \in (0,T)} \|\partial_x \varphi(\cdot,t)\|^2 +
 \nu  \|\partial_{xx} \varphi\|_Q^2 \lesssim \exp({D_0 T}) \| \partial_x \psi\|^2.
\end{equation}
\end{lemma}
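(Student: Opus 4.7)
The plan is a standard backward-in-time energy estimate for the adjoint, reformulated as a forward parabolic problem. Setting $\tau := T - t$ and $\tilde\varphi(\cdot,\tau) := \varphi(\cdot,T-\tau)$ reduces \eqref{adjoint_pert_equation} to a forward transport--diffusion problem with initial datum $\tilde\varphi(0)=\psi$ and the same periodic boundary conditions. Differentiating this equation in $x$ yields a closed equation for $\tilde w := \partial_x\tilde\varphi$ whose source is the reaction coefficient $\partial_x a$ acting on $\tilde w$, advected by $a$, with diffusion coefficient $\nu$.

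The heart of the proof is then the $L^2$ energy estimate for $\tilde w$: multiply by $\tilde w$, integrate over $I$, and use integration by parts together with periodicity to rewrite $\int a\,\partial_x\tilde w\cdot\tilde w = -\tfrac12\int(\partial_x a)\,\tilde w^2$. After the signs from the time reversal are tracked carefully, the reaction and the integrated-by-parts transport contributions combine to produce an energy identity of the schematic form
\[
\tfrac{d}{d\tau}\|\tilde w\|^2 + 2\nu\,\|\partial_x\tilde w\|^2 = \int(\partial_x a)\,\tilde w^2.
\]
The crucial observation is that only the \emph{upper} bound $\partial_x a\leq D_0$ is needed to close the right-hand side, since then $\int(\partial_x a)\,\tilde w^2\leq D_0\|\tilde w\|^2$. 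This upper bound is supplied by the continuous gradient maximum principle \eqref{grad_bound} for $u$ together with its discrete counterpart \eqref{grad_max} for $u_h$, and it is moderate, depending only on the smoothness of the initial data. No bound on the potentially huge negative part $(\partial_x a)_-\sim\nu^{-1}$ is required.

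Once the estimate is in this form, Gronwall's lemma in $\tau$ delivers $\|\tilde w(\tau)\|^2\leq\|\partial_x\psi\|^2\exp(D_0 T)$ for all $\tau\in(0,T)$; reversing time gives the $\sup$-in-time bound on $\|\partial_x\varphi\|^2$. The dissipation contribution $\nu\|\partial_{xx}\varphi\|_Q^2 = \nu\int_0^T\|\partial_x\tilde w\|^2\,d\tau$ is then recovered by integrating the above energy identity over $(0,T)$, isolating the viscous term, and bounding the right-hand side by $D_0 T\,\sup_\tau\|\tilde w\|^2 + \|\partial_x\psi\|^2$, which by the already-proved sup bound is $\lesssim\exp(D_0 T)\|\partial_x\psi\|^2$.

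The main obstacle is book-keeping the sign structure so that it is the one-sided bound $\partial_x a\leq D_0$, and not the two-sided bound $\|\partial_x a\|_{L^\infty}$, that controls the Gronwall step. This is precisely the mechanism by which the adjoint inherits better stability than the primal perturbation equation of Section~2.1: the transport term in the adjoint, after integration by parts, cooperates with the gradient maximum principles rather than against them, so that a shock, which produces large negative $\partial_x a$, does not pollute the constant.
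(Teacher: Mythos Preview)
The proposal is correct and takes essentially the same approach as the paper. The paper tests the adjoint equation directly against $-\partial_{xx}\varphi$ and integrates over $(t,T)$, whereas you first reverse time, differentiate in $x$, and test with $\tilde w=\partial_x\tilde\varphi$; after one integration by parts these yield the same energy identity, and both hinge on precisely the mechanism you highlight --- the one-sided bound $\partial_x a(u,u_h)\le D_0$ supplied by the gradient maximum principles \eqref{grad_bound} and \eqref{grad_max}, so that only the positive part of $\partial_x a$ enters the Gronwall constant.
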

\begin{proof}
Multiply the equation \eqref{adjoint_pert_equation} by
$- \partial_{xx} \varphi$ and integrate over $I \times (t,T)$
\begin{equation}\label{first_rel}
\| \partial_x \varphi(\cdot,t)\|^2 
+ 2 \int_t^T  \nu \|  \partial_{xx} \varphi\|^2~\mbox{d}t = \|\partial_x \psi\|^2 - 2 \int_t^T
(a(u,u_h) \partial_x \varphi, \partial_{xx} \varphi)~\mbox{d}t.
\end{equation}
 Note that, by an integration by parts and the maximum principles
 \eqref{umax} and \eqref{grad_max}
\begin{multline*}
2 \int_t^T (a(u,u_h) \partial_x \varphi,  \partial_{xx}
\varphi)_I ~\mbox{d}t =
- \int_t^T (\partial_x a(u,u_h) \partial_x
\varphi, \partial_x \varphi) ~\mbox{d}t \\ \geq -D_0  \int_t^T
\|\partial_x \varphi\|^2 ~\mbox{d}t.
\end{multline*}
The result in $L^\infty(0,T;L^2(I))$ follows from the Gronwall's lemma and taking the supremum
over $t \in (0,T)$ of the resulting expression
\[
\|\partial_x \varphi(\cdot,t)\|^2 \lesssim \exp(
  D_0 t ) \| \partial_x \psi\|^2.
\]
 The result for the second derivatives then follows
by using this expression to bound the right hand side of
\eqref{first_rel}
\begin{multline*}
2 \int_0^T \nu \|  \partial_{xx} \varphi\|^2~\mbox{d}t \leq
\|\partial_x \psi\|^2 +  \int_0^T (\partial_x a(u,u_h) \partial_x
\varphi, \partial_x \varphi) ~\mbox{d}t \\
\lesssim \| \partial_x \psi\|^2 \Bigl(1 +  D_0 \int_0^T  \exp(
  D_0 t ) ~\mbox{d}t\Bigr) = \exp(
  D_0 T) \|\partial_x \psi\|^2.
\end{multline*}
\end{proof}
\section{Error estimates for filtered
  quantities}
We will consider the differential filter defined in \eqref{filter_def},
where $\delta$ denotes a filter width to be specified. 
The norm associated to the differential filter is
given by
\[
\tnorm{\tilde u}_\delta := (\|\delta \partial_x \tilde
u\|^2+ \|\tilde u\|^2)^\frac12.
\]
We introduce the filtered error $\tilde e := \tilde u - \tilde u_h$,
where $\tilde u$ and $\tilde u_h$ denote the filtered exact and
approximate solutions respectively obtained by solving \eqref{filter_def} with
$u$ and $u_h$ as right hand side.
The analysis uses the stability properties of the adjoint perturbation
equation (Lemma \ref{adjoint_stab}) and the stability properties of
the discrete problem  (Lemma \ref{discrete_stab}) to
derive first a posteriori error bounds for the filtered quantities and
then a priori bounds by upper bounding the a posteriori residuals, by
a priori quantities.
\begin{theorem}\label{thm:mainresult}
Let $u$ be the solution of \eqref{burger}, $u_h$ be the solution of
\eqref{art_visc_FEM}. Then the following holds:
\begin{itemize}
\item A posteriori upper bound
\begin{multline}\label{aposteriori}
\hspace{-1.0cm}\tnorm{\tilde e(T)}_\delta \lesssim \exp(D_0 T)\left(\frac{h}{\delta^2}\right)^\frac12
\Bigl(h^\frac12 \|(u-u_h)(0)\|+
h^\frac12  \int_0^T \inf_{v_h \in V_h} \|v_h + u_h \partial_x
u_h\|  ~\mbox{d}t \\+ h^{\frac32} \int_0^T  \|\partial_x \partial_t
u_h\| ~\mbox{d}t +  \int_0^T  \|\max(0,\hat \nu-\nu)^\frac12 \partial_x 
u_h\| ~\mbox{d}t \\
+ 
   h \Bigl(\int_0^T  \nu\|[\partial_x u_h]\|_N^2  ~\mbox{d}t\Bigr)^{\frac12}
   \Bigr),
\end{multline}
where 
\[
\|[\partial_x u_h]\|_N:= \left( \sum_{i=0}^{N-1} (\partial_x u_h(x_i)\vert_{I_{i+1}}
- \partial_x u_h(x_i)\vert_{I_{i}})^2 \right)^{\frac12},
\]
with $I_N$ identified with $I_0$ by periodicity.
\item A priori upper bound\\ 
\begin{multline}\label{apriori_Rehigh}
\tnorm{\tilde e}_\delta \lesssim  \exp(D_0 T) \left(\frac{h}{\delta^2}\right)^\frac12
\Bigl(\Bigl(h^\frac12 + U_0^\frac12 \sqrt{T} \Bigr)\|u_0\| +
 (T U_0 + h^{\frac12}\nu^\frac12) \| \partial_x u_0\|
\Bigr).
\end{multline}
\end{itemize}
\end{theorem}
\begin{proof}
Let $\psi = \tilde e(T)$ in the definition \eqref{adjoint_pert_equation} of the dual adjoint problem. 
Using the design of the dual problem and the definition of $\tilde e$ we have
by the relation \eqref{error_rep}
\begin{multline*}
\tnorm{\tilde e(T)}^2_\delta =
(\delta \partial_x \tilde e(T),\partial_x \tilde e(T))_I + (\tilde
e(T),\tilde e(T))_I = (e(T),\tilde e(T))_I
\\=(e(0), \varphi(0))_I - \int_0^T (\partial_t u_h + u_h \partial_x
u_h, \varphi)_I ~\mbox{d}t - \int_0^T (\nu \partial_x
u_h, \partial_x \varphi)_I ~\mbox{d}t.
\end{multline*}
Taking $v_h = \pi_h \varphi$, with $\pi_h$ denoting the standard $L^2$-projection, in \eqref{art_visc_FEM} and adding to the
above expression yields
\begin{multline*}
\tnorm{\tilde e}^2_\delta =(e(0), \varphi(0))_I - \int_0^T (\partial_t u_h + u_h \partial_x
u_h, \varphi)_I ~\mbox{d}t - \int_0^T (\nu \partial_x
u_h, \partial_x \varphi)_I ~\mbox{d}t\\
+ \int_0^T (\partial_t u_h,\pi_h \varphi) _h ~\mbox{d}t +\int_0^T ( u_h \partial_x
u_h, \pi_h \varphi)_I ~\mbox{d}t + \int_0^T (\hat \nu \partial_x
u_h, \partial_x \pi_h \varphi)_I ~\mbox{d}t\\
= \underbrace{ (e(0), \varphi(0))_I}_{T_0} \underbrace{ - \int_0^T (\partial_t u_h + u_h \partial_x
u_h, \varphi - \pi_h \varphi)_I ~\mbox{d}t}_{T_1} \\
\underbrace{ -\int_0^T ((\partial_t
u_h,\pi_h \varphi_h)_I - (\partial_t
u_h,\pi_h \varphi_h)_h) ~\mbox{d}t}_{T_2} \underbrace{- \int_0^T (\nu \partial_x
u_h, \partial_x (\varphi - \pi_h \varphi))_I ~\mbox{d}t}_{T_3}  \\ \underbrace{+ \int_0^T
(\max(0,\hat \nu(u_h) - \nu) \partial_x
u_h, \partial_x \pi_h \varphi)_I ~\mbox{d}t}_{T_4} .
\end{multline*}
Now consider the terms $T_0$ to $T_4$ term by term. First use the
orthogonality $(e_0,v_h)_I=0$ for all $v_h \in V_h$,
\[
T_0 = (e(0), \varphi-\pi_h \varphi)_I \lesssim \|e_0\|
{h} \sup_{t\in(0,T)}\|\partial_x \varphi(t)\|.
\]
Similarly for all $w_h \in
V_h$ there holds
\begin{multline*}
T_1 = - \int_0^T (w_h + u_h \partial_x
u_h, \varphi - \pi_h \varphi)_I ~\mbox{d}t \\ \leq \int_0^T \|w_h + u_h \partial_x
u_h\|  ~\mbox{d}t \sup_{t\in(0,T)}
\|(\varphi - \pi_h \varphi)(t)\| 
\end{multline*}
and hence
\[
T_1  \lesssim {h}\int_0^T \inf_{v_h \in V_h} \|v_h + u_h \partial_x
u_h\| ~\mbox{d}t \sup_{t\in(0,T)}\|\partial_x
\varphi(t)\|.
\]
Let $\mathcal{I}_h$ denote the standard Lagrange interpolant.
By the definition of the discrete $L^2$-inner product
$(\cdot,\cdot)_h$ we have 
\begin{multline*}
T_2 =-\int_0^T \int_I (\partial_t u_h \pi_h \varphi -
\mathcal{I}_h(\partial_t u_h \pi_h \varphi)) ~\mbox{d}x \mbox{d}t\\
\leq \int_0^T \int_I h^2 |\partial_x \partial_t u_h \partial_x \pi_h \varphi| ~\mbox{d}x\mbox{d}t 
\lesssim \int_0^T h^2 \|\partial_x \partial_t u_h\| \| \partial_x
\pi_h \varphi\| ~\mbox{d}t\\
\lesssim \int_0^T
h^2 \|\partial_x \partial_t u_h\| ~\mbox{d}t \sup_{t\in(0,T)}\| \partial_x \varphi(t)\| .
\end{multline*}
For $T_3$ we have after an integration by parts and using a trace
inequality followed by approximation
\begin{multline*}
T_3 = \int_0^T \sum_{i=0}^{N-1} \nu (\partial_x u_h(x_i)\vert_{I_{i+1}}
- \partial_x u_h(x_i)\vert_{I_{i}}) (\varphi(x_i) - \pi_h
\varphi(x_i))  ~\mbox{d}t
\\[3mm]
\lesssim \int_0^T \nu \|[\partial_x u_h]\|_N
(h^{-\frac12} \|\varphi- \pi_h \varphi \| + h^{\frac12} \|\partial_x (\varphi- \pi_h \varphi) \|)  ~\mbox{d}t\\[3mm]
\lesssim \left(\int_0^T  \nu\|[\partial_x u_h]\|_N^2  ~\mbox{d}t \right)^{\frac12}
h^{\frac32}
\|\nu^{\frac12} \partial_{xx} \varphi\|_Q
\end{multline*}
Finally the non-consistent artificial viscosity term is controlled
using the Cauchy-Schwarz inequality  and the $H^1$-stability of the $L^2$-projection $\|\partial_x \pi_h
\varphi\| \lesssim \|\partial_x 
\varphi\|$
\begin{multline*}
T_4 \leq \max_{i} \hat \nu \vert_{I_i} \int_0^T \|\max(0,\hat
\nu - \nu)^{\frac12} \partial_x u_h\|
~\mbox{d}t  \sup_{t \in (0,T)} \|\partial_x \varphi(t)\|\\[3mm]
\lesssim \left({U_0 h}\right)^\frac12 \int_0^T
  \|\max(0,\hat \nu-\nu)^\frac12\partial_x u_h\|
~\mbox{d}t \sup_{t \in (0,T)} \|\partial_x \varphi(t)\|.
\end{multline*}
Collecting the bounds for $T_0-T_4$ and using the stability estimate \eqref{adjoint_stability}
we have

\begin{multline}\label{apost_conclude}
\tnorm{\tilde e(T)}^2_\delta \lesssim \exp(D_0 T) \left(\frac{h}{\delta^2}\right)^\frac12
\Bigl(h^\frac12 \|e(0)\|+
h^\frac12  \int_0^T \inf_{v_h \in V_h} \|v_h + u_h \partial_x
u_h\|  ~\mbox{d}t\\+  U^\frac12_0 \int_0^T
  \|\max(0,\hat \nu-\nu)^{\frac12}\partial_x u_h\|
~\mbox{d}t+ h^{\frac32} \int_0^T  \|\partial_x \partial_t
u_h\| ~\mbox{d}t \\
+ 
   h \left(\int_0^T  \nu\|\jump{\partial_x u_h}\|_N^2  ~\mbox{d}t
   \right)^{\frac12} \Bigr) \tnorm{\tilde e(T)}_\delta
\end{multline}
from which \eqref{aposteriori} follows.

The a priori error estimate now follows by using discrete stability to
bound the residuals. Since we do not assume any regularity of the
exact solution we can not assume that any stronger bounds hold. Note
that by a well known discrete interpolation estimate \cite{Bu05} the convective
residual may be bounded, 
\[
\inf_{v_h \in V_h} \|h^{\frac12} (v_h - u_h \partial_x u_h)\| \lesssim
h \|\jump{u_h \partial_x u_h}\|_N. 
\]
Using that
\begin{multline*}
h [u_h \partial_x u_h]\vert_{x_i} \leq h |u_h(x_i)| \left(\frac{|\jump{\partial_x
  u_h}|}{2\{|\partial_x u_h|\}+\epsilon} \right)\vert_{x_i} \left(|\partial_x
u_h|\vert_{I_{i-1}} +|\partial_x
u_h|\vert_{I_{i}} + \epsilon\right)\\
\leq (\hat \nu(u_h) \partial_x u_h)\vert_{I_{i-1}}+ (\hat
\nu(u_h) \partial_x u_h)\vert_{I_{i}}+ h U_0 \epsilon
\end{multline*}
we may deduce
\[
\inf_{v_h \in V_h} \|h^{\frac12} (v_h - u_h \partial_x u_h)\|  \lesssim U_0^{\frac12} \|\hat
\nu^\frac12 \partial_x u_h\|+  h^{\frac12} U_0 \epsilon.
\]
In the linear case the inequality is trivial by taking $v_h=0$ and
using \eqref{umax}.

Then use the Cauchy-Schwarz inequality in time for the two terms of
the second line of \eqref{apost_conclude}, an inverse inequality for
the second term in the second line and
a trace inequality for the last term of \eqref{apost_conclude}. 
\begin{multline*}
\tnorm{\tilde e(T)}_\delta \lesssim  \exp( D_0 T) \left(\frac{h}{\delta^2}\right)^\frac12
\Bigl(h^\frac12 \|e(0)\|+
(h^\frac12 + U_0^\frac12 \sqrt{T}) \| \hat \nu^\frac12 \partial_x
u_h\|_Q \\+ h^{\frac12} \sqrt{T} \|\partial_t
u_h\|_Q + T^{\frac12} h^{\frac12} U_0 \epsilon\Bigr).
\end{multline*}
We conclude by applying the stability estimates and \eqref{energy_est} and
\eqref{timeder_stab}
leading to 
\begin{multline*}
\tnorm{\tilde e}_\delta \lesssim  \exp(D_0 T) \left(\frac{h}{\delta^2}\right)^\frac12
\Bigl((h^\frac12 + U_0^\frac12 \sqrt{T} )\|u_0\|\\
+  (T U_0 + h^{\frac12}\nu^\frac12) \|\partial_x u_0\| + T^{\frac12} h^{\frac12} U_0 \epsilon
\Bigr).
\end{multline*}
\end{proof}\\
Observe that the above estimate is independent both of the regularity of
the exact solution and of the flow regime.
\begin{remark}
Note that if instead the initial data $u_h(x,0)$ is chosen as the
nodal interpolant of $u_0$,  $\mathcal{I}_h u_0$, we may define
$U_0 = \max_{x \in I} |u_0(x)|$ and $D_0 = \max_{x \in
  I} \partial_x u_0(x)$. On the other hand we can no longer use 
$L^2$-orthogonality in the upper bound for $T_0$. It appears that
in that case we must use the maximum principle of the dual problem to obtain
\begin{multline*}
T_0 = (e(0),\varphi)_I \lesssim \|e(0)\|_{L^1(I)}
\|\varphi(0)\|_{L^\infty(I)} \\
\leq \|e(0)\|_{L^1(I)} \|\Psi\|_{L^\infty(I)} \lesssim \delta^{-1}
\|e(0)\|_{L^1(I)} \tnorm{\tilde e}_\delta\\
\lesssim \left(\frac{h^s}{\delta}\right) \|\partial^s_x
u_0\|_{L^1(I)} \tnorm{\tilde e}_\delta,\, s=1,2.
\end{multline*}
The global convergence order will be the same, but it appears that the error
contribution from the initial data will be larger and the factor $\|\partial^s_x u_0\|_{L^1(I)}$
must be added to the right hand side of \eqref{apriori_Rehigh}. Another downside to
this approach is that it only works in one space dimension, whereas
before only the energy stability estimates of Lemma \ref{discrete_stab} used one
dimensional inverse inequalities.
\end{remark}
\begin{remark}
We have kept the dependence on the regularization parameter $\epsilon$
in the above proof. This shows the effect of regularization on the
computational error under the assumption that \eqref{TVbound} still
holds under regularization.
\end{remark}
\begin{remark}
Since all these estimates are independent of $\nu$ they are
also valid for the purely hyperbolic case, with $u$ the entropy solution.
\end{remark}
\section{$L^p$-error estimates using interpolation}
Using the estimate of the filtered error of Theorem \ref{thm:mainresult} together
with the $TV$-a priori bound of the discrete solution of Corollary
\ref{TVbound} we may now use an interpolation argument to prove an estimate
of the error in $L^p$-norm. Below we omit the dependence in time in
all arguments for clarity of exposition.
\begin{theorem}
Let $u$ be the solution of \eqref{burger} and $u_h$ the solution of \eqref{art_visc_FEM}
for which the conclusion of Theorem \ref{thm:mainresult} holds. Then
for $t>0$,
\[
\|u  - u_h\|_{L^p(I)} \lesssim h^{\frac{1}{3p}}, \quad 1\leq p < \infty,
\]
where the hidden constant depends only on the constants of Theorem
\ref{thm:mainresult} and Corollary \ref{TVbound}.
\end{theorem}
\begin{proof}
Let $\tilde e$ be the filtered error obtained taking $\delta = 1$ in \eqref{filter_def}.
By definition and using the triangle inequality and Sobolev injection
there holds
\[
\|u-u_h\|_{L^p(I)} \leq \|\partial_{xx} \tilde
e\|_{L^p(I)} + \|\tilde e\|_{L^p(I)} \lesssim  \|\partial_{xx} \tilde
e\|_{L^p(I)} + \tnorm{\tilde e}_1.
\]
By the Galiardo-Nirenberg interpolation inequality there holds
\[
\|\partial_{xx} \tilde e\|_{L^p(I)} \lesssim 
\| \partial^3_x \tilde
e\|_{L^1(I)}^{1-\frac{2}{3p}} \|\partial_x \tilde e\|_{L^2(I)}^{\frac{2}{3p}}.
\]
Then by adding and subtracting $\partial_x \tilde e$ in
the first factor we have after a triangle inequality, using the definition of $\tilde e$
\[
\|\partial^3_x \tilde e\|_{L^1(I)}^{1-\frac{2}{3p}} \|\partial_x \tilde e\|_{L^2(I)}^{\frac{2}{3p}} \leq
(\|\partial_x (u - u_h)\|_{L^1(I)}+ \|\partial_x
\tilde e\|_{L^2(I)})^{1-\frac{2}{3p}}\|\partial_x \tilde e\|_{L^2(I)}^{\frac{2}{3p}}.
\]
We conclude using the bound $TV(u) \lesssim TV(u_0)$, Corollary
\ref{TVbound} and Theorem \ref{thm:mainresult} that
\[
\|u - u_h\|_{L^p(I)} \lesssim h^{\frac{1}{3p}} + h^{\frac12}.
\]
\end{proof}
\section{Comparison with the theory of Nessyahu-Tadmor}
Comparing with the estimates obtained in \cite{NT92} 
\begin{equation}\label{NessTadconv}
\|u - u_h\|_{L^p(I)} \leq h^{\frac{1}{2p}}
\end{equation}
we see that we are suboptimal by an order of $h^{\frac{1}{6p}}$. This
loss of convergence is due to the fact that we need to control the
$L^2$-norm of the convective residual using the stabilization
resulting in the classical loss of $h^{\frac12}$. If the
residual had been in $L^1$ it would have been a priori bounded by
Corollary \ref{TVbound} and we could recover the convergence
\eqref{NessTadconv}. This program is indeed possible to carry out as we
will show in this section. We follow the abstract framework proposed
in \cite[Section 4.2]{Tad98}. First we recall the $Lip$-norm and
the associated dual semi-norm,
\[
\|u\|_{Lip} := ess\,sup_{x \ne y} \left|\frac{u(x) - u(y)}{x-y}\right|,
\]
\[
\|u\|_{Lip'}:= \sup_{v \in Lip, \|v\|_{Lip} = 1} (u - \bar u,v), \quad\bar
u := \int_I u ~\mbox{d}x.
\]
We also need to measure functions one-sided Lipschitz continuity,
\[
\|u\|_{Lip+} := ess\,sup_{x \ne y} \left(\frac{u(x) - u(y)}{x-y}\right)_+.
\]
We can then prove the result,
\begin{proposition}
Let $u$ be the entropy solution of \eqref{burger} with $\nu = 0$ and
$u_h$ the solution of \eqref{art_visc_FEM} with $\nu=0$. Then there
holds
\[
\|u_h - u\|_{Lip'} \leq C(u_0,T) h
\]
and
\[
\|u - u_h\|_{L^p(I)} \leq  C(u_0,T) h^{\frac{1}{2p}}
\]
\end{proposition}
\begin{proof}
This follows from Theorem 4.1 and Corollary 4.1 of \cite{Tad98} once
we have verified that the scheme satisfies three properties:
\begin{enumerate}
\item The solutions $u_h$ are conservative: this follows immediately by
  testing with $v_h = 1$ in \eqref{art_visc_FEM} and recalling that
  mass-lumping is conservative.
\item The solutions $u_h$ are $Lip'$-consistent: we must verify that
\begin{equation}\label{initial_data_lip}
\|u_h(\cdot,0) - u(\cdot,0)\|_{Lip'} \leq C h 
\end{equation}
and
\begin{equation}\label{residual_lip}
\|\partial_t u_h +\frac12 \partial_x u_h^2\|_{Lip'(x,[0,T]]} \leq C h.
\end{equation}
\item The solution $u_h$ and the intial data are $Lip^+$ stable and
$Lip^+$ bounded respectively: the first part is a consequence of Lemma
\ref{DMP} and the second follows by our choice of a smooth initial
data. If the initial data is in the finite element space it is enough
to choose it $Lip^+$ bounded.
\end{enumerate}
It only remains to show that \eqref{initial_data_lip} and
\eqref{residual_lip} are satisfied. The first follows since by the
orthogonality of the $L^2$-projection
\[
(u_h(\cdot,0) - u(\cdot,0),(\phi - \bar \phi) - \mathcal{I}_h (\phi - \bar
\phi)) \leq h \|u_h(\cdot,0) - u(\cdot,0)\|_{L^1(I)} \|\phi\|_{Lip}.
\]
To prove \eqref{residual_lip} we need to prove that the time
derivative $\partial_t u_h$ is bounded in $L^1(I)$. This
bound is a consequence of the formulation \eqref{art_visc_FEM} and
Corollary \ref{TVbound}
\begin{multline}\label{dt_L1}
\|\partial_t u_h\|_{L^1(I)} \leq h \sum_{i} |\partial_t u_h(x_i)| \leq
C (\|\frac12 \partial_x u_h^2\|_{L^1(I)} + \|h^{-1} \hat
\nu \partial_x u_h\|_{L^1(I)})\\ \leq C U_0 \|\partial_x
u_h\|_{L^1(I)} \leq C U_0 TV(u_0).
\end{multline}
To prove \eqref{residual_lip} we note that by Galerkin orthogonality
there holds, for all $t$,
\begin{multline*}
(\partial_t u_h +\frac12 \partial_x u_h^2, \phi) = (\partial_t u_h
+\frac12 \partial_x u_h^2, \phi - \mathcal{I}_h \phi)_I + \int_I
(\partial_t u_h\mathcal{I}_h \phi - (\mathcal{I}_h (\partial_t u_h
\mathcal{I}_h \phi) ) ~\mbox{d}x\\
- (\hat \nu \partial_x u_h, \partial_x \mathcal{I}_h \phi)_I \\
\leq
h (\|\partial_t u_h\|_{L^1(I)} + U_0 \|\partial_x u_h\|_{L^1(I)}  + h
\|\partial_x \partial_t u_h\|_{L^1(I)} + h^{-1} \|\hat \nu \partial_x
u_h\|_{L^1(I)}) \|\phi\|_{Lip}.
\end{multline*}
Note that $h
\|\partial_x \partial_t u_h\|_{L^1(I)} + h^{-1} \|\hat \nu \partial_x
u_h\|_{L^1(I)}\lesssim\|\partial_t u_h\|_{L^1(I)} + U_0 \|\partial_x
u_h\|_{L^1(I)}$ and the claim follows using the bounds of Corollary
\ref{TVbound} and \eqref{dt_L1}.
\end{proof}\\
By interpolation estimates on the gradient may be obtained and by using post processing pointwise error estimates may be
obtained, we refer the interested reader to \cite{NT92,Tad98}.
\section{Numerical examples}
In this section we will study two numerical examples computed with
$\nu=0$. We first consider a problem with smooth initial data
\[
u_0 = \frac12 (\cos(\pi x)+1).
\]
We compute the solution at $T=0.5$, before shock formation and compute
the exact solution on a mesh with $6400$ mesh points using fixed
point iteration. The intial data and the final solutions are given in
Figure \ref{smooth_sol}. In Table \ref{smootheps0} errors in several
different norms are presented on four consequtive meshes.  Here
$\epsilon=0$ to machine precision. Experimental
convergence rates are given in parenthesis. In the following table (Table
\ref{smoothepsh}) we present the same results for $\epsilon=h$. The results are
similar, with the difference that the regularized method gives second
order convergence in all norms, whereas the one without regularization
exhibits a slight reduction in the order in the $L^2$-norm. This is
not surprising since formally the order of the method is $h^{\frac32}$
and the unregularized method adds nonconsistent first order viscosity
at local extrema.
\begin{figure}
\begin{center}
\includegraphics[width=0.25\paperwidth]{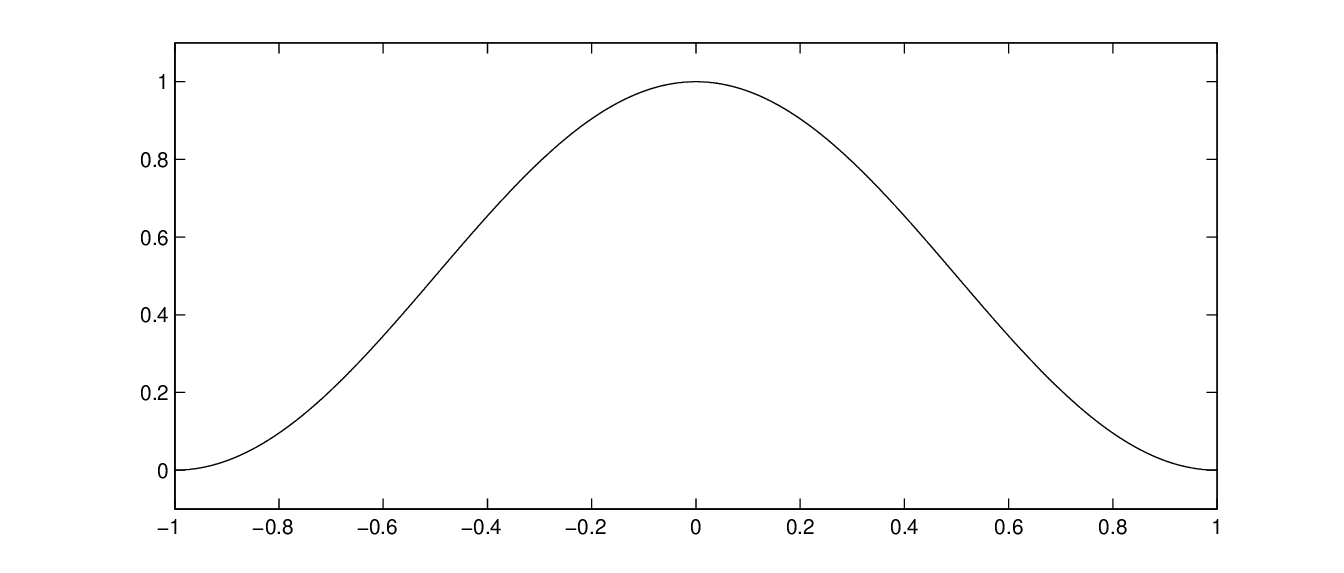}
\includegraphics[width=0.25\paperwidth]{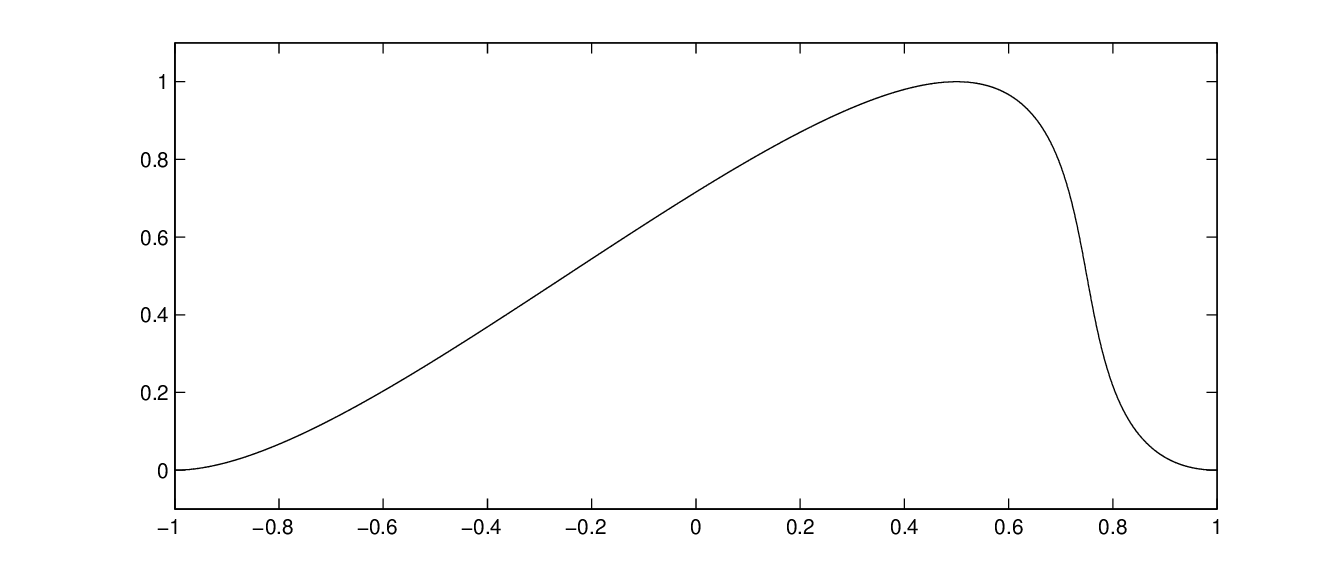}
\caption{Left smooth initial condition; right solution at $T=0.5$}
\label{smooth_sol}
\end{center}
\end{figure}
\begin{table}
\begin{center}
\begin{tabular}{|c|c|c|c|c|c|c|}
\hline
N & $\|u-u_h\|_{L^1(I)}$ & $\|u-u_h\|_{L^2(I)}$ & $\tnorm{\tilde
  e}_{1}$ $(\sim H^{-1}(I))$& $\tnorm{\tilde
  e}_{h}$ $(\sim L^2(I))$\\
\hline
100 & $2.5\cdot 10^{-3}$ & $3.6\cdot 10^{-3}$& $3.0\cdot 10^{-4}$ & $3.2\cdot 10^{-3}$\\
\hline
200 & $6.7\cdot 10^{-4}$ (1.9) & $1.0\cdot 10^{-3}$ (1.8) & $7.0\cdot
10^{-5}$ (2.1) &$9.5\cdot 10^{-4}$ (1.8)\\
\hline
400 & $1.8\cdot 10^{-4}$ (1.9) & $3.0\cdot 10^{-4}$ (1.7) & $1.7\cdot
10^{-5}$ (2.0) &$2.9\cdot 10^{-5}$ (1.7)\\
\hline
800 & $4.6\cdot 10^{-5}$ (2.0) & $8.9\cdot 10^{-5}$ (1.8) & $4.2\cdot
10^{-6}$ (2.0)& $8.7 \cdot 10^{-6}$ (1.7)\\
\hline
\end{tabular}
\end{center}
\caption{$\epsilon = O(10^{-16})$, smooth solution}\label{smootheps0}
\end{table}
\begin{table}
\begin{center}
\begin{tabular}{|c|c|c|c|c|c|}
\hline
N & $\|u-u_h\|_{L^1(I)}$ & $\|u-u_h\|_{L^2(I)}$ & $\tnorm{\tilde
  e}_{1}$ $(\sim H^{-1}(I))$& $\tnorm{\tilde
  e}_{h}$ $(\sim L^2(I))$\\
\hline
100 & $1.9\cdot 10^{-3}$ & $3.0\cdot 10^{-3}$& $2.3\cdot 10^{-4}$& $2.6\cdot 10^{-3}$\\
\hline
200 & $4.7\cdot 10^{-4}$ (2.0) & $7.7\cdot 10^{-4}$ (2.0) & $5.5\cdot
10^{-5}$ (2.1) &$7.1\cdot 10^{-4}$ (1.9)\\
\hline
400 & $1.2\cdot 10^{-4}$ (2.0) & $2.1\cdot 10^{-4}$ (1.9) & $1.3\cdot
10^{-5}$ (2.1)&$1.9\cdot 10^{-4}$ (1.8)\\
\hline
800 & $3.0\cdot 10^{-5}$ (2.0) & $5.5\cdot 10^{-5}$ (1.9) & $3.3\cdot
10^{-6}$ (2.0) &$5.3 \cdot 10^{-5}$ (1.8)\\
\hline
\end{tabular}
\end{center}
\caption{$\epsilon = h$, smooth solution}\label{smoothepsh}
\end{table}

Now we consider a problem with non-smooth solution. The initial data
and final time exact solution
is given in Figure \ref{rough_sol}. We compute the solution at
$T=0.5$ when the shock has formed. The exact solution is computed using the
method of characteristics on a mesh with 12800 elements. We present
tables with the same errors as in the previous case for the method
without (Table \ref{rougheps0}) and with (Table \ref{roughepsh})
regularization. For this case there is even less difference between
the two cases.
We observe
first order convergence for the $L^1$-error and the $H^{-1}$-norm
error
and $1/2$-order convergence in the $L^2$-norm. The computations
clearly show how the weaker norm behaves either as an $L^1$-norm for
$\delta=1$ or an $L^2$-norm for $\delta=h$. Intermediate values of
$\delta$
appears to interpolate between these two norms. This indicates that the
principle of our estimate, with the order depending on how $\delta$ is
chosen with respect to $h$ is correct. A superconvergence of
approximately half an order is observed for all the computations with
the nonsmooth solution, compared to what is predicted by theory.
 \begin{figure}
\begin{center}
\includegraphics[width=0.25\paperwidth]{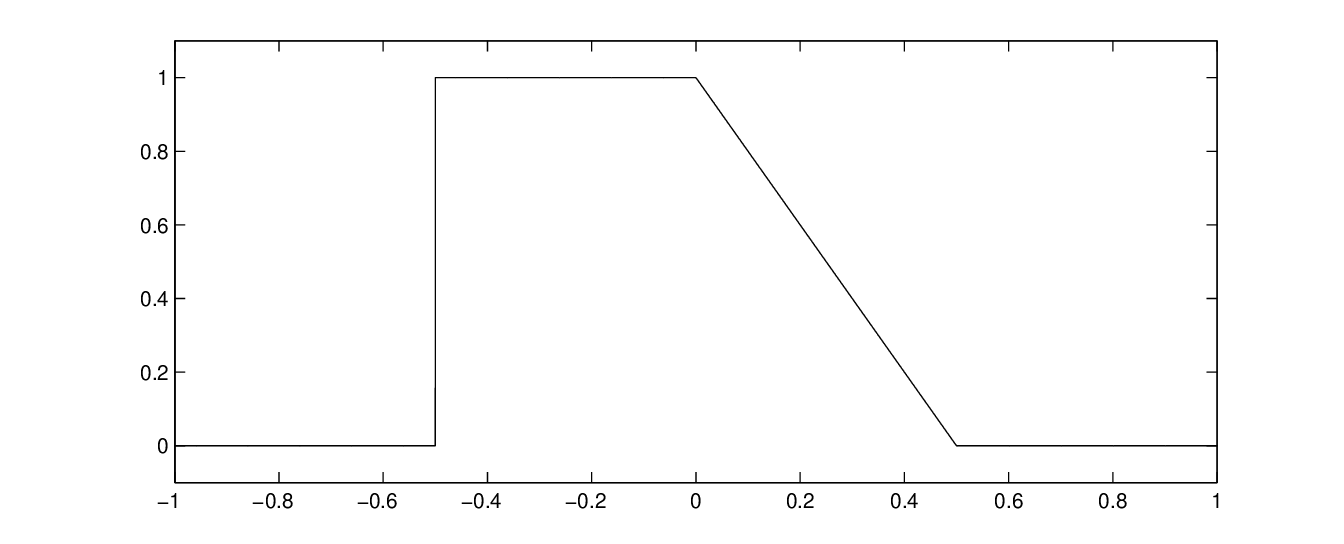}
\includegraphics[width=0.25\paperwidth]{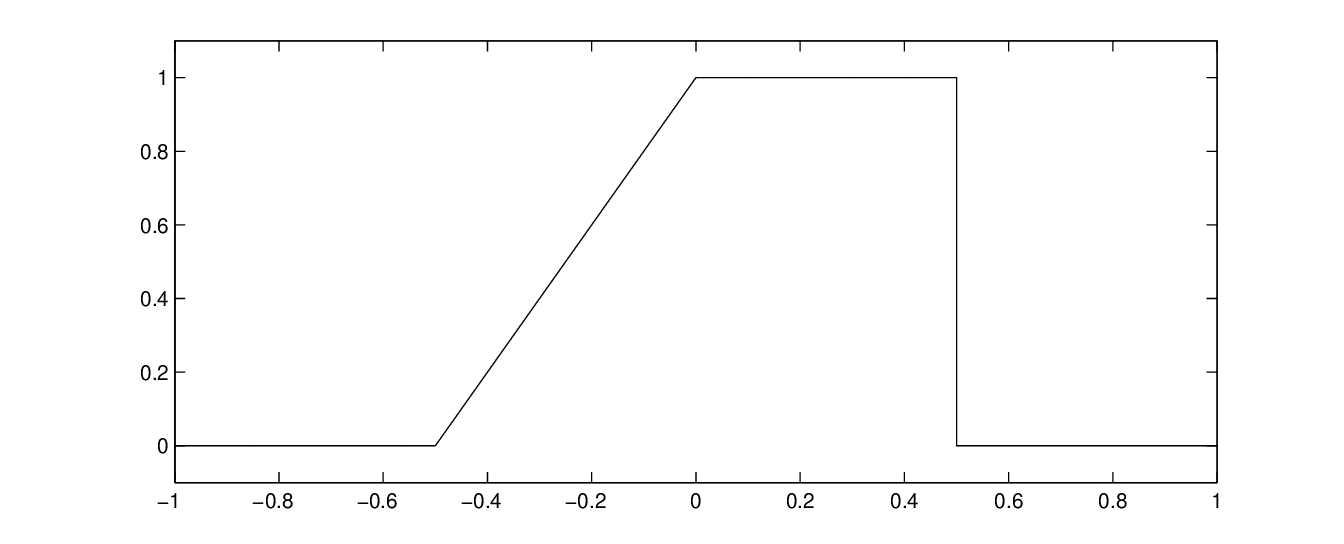}
\caption{Left nonsmooth initial condition; right solution at $T=0.5$}
\label{rough_sol}
\end{center}
\end{figure}
\begin{table}
\begin{center}
\begin{tabular}{|c|c|c|c|c|c|c|}
\hline
N & $\|u-u_h\|_{L^1(I)}$ & $\|u-u_h\|_{L^2(I)}$ &$\tnorm{\tilde
  e}_{1}$ $(\sim H^{-1}(I))$& $\tnorm{\tilde
  e}_{h}$ $(\sim L^2(I))$\\
\hline
100 & $0.036$ & $0.071$  & $6.4 \cdot 10^{-3}$ &$0.038$ \\
\hline
200 & $0.018$ (1.0) & $0.049$ (0.5) & $3.2 \cdot 10^{-3}$ (1.0) &$0.024$ (0.7) \\
\hline
400 & $9.4 \cdot 10^{-3}$ (0.9) & $0.034$ (0.5) & $1.6 \cdot 10^{-3}$
(1.0) & $0.016$ (0.6) \\
\hline
800 & $4.7 \cdot 10^{-3}$ (1.0) & $0.023$ (0.6) & $7.9 \cdot 10^{-4}$
(1.0) & $0.011$ (0.5)\\
\hline
\end{tabular}
\end{center}
\caption{$\epsilon = O(10^{-16})$, nonsmooth solution}\label{rougheps0}
\end{table}
\begin{table}
\begin{center}
\begin{tabular}{|c|c|c|c|c|c|c|}
\hline
N & $\|u-u_h\|_{L^1(I)}$ & $\|u-u_h\|_{L^2(I)}$ & $\tnorm{\tilde
  e}_{1}$ $(\sim H^{-1}(I))$& $\tnorm{\tilde
  e}_{h}$ $(\sim L^2(I))$\\
\hline
100 & $0.035$ & $0.070$  & $6.3 \cdot 10^{-3}$ & $0.037$\\
\hline
200 & $0.018$ (1.0) & $0.048$ (0.5) & $3.2 \cdot 10^{-3}$ (1.0) &$0.024$ (0.6)\\
\hline
400 & $9.1 \cdot 10^{-3}$ (1.0) & $0.033$ (0.5) & $1.6 \cdot 10^{-3}$ (1.0) & $0.016$ (0.6) \\
\hline
800 & $4.6 \cdot 10^{-3}$ (1.0) & $0.023$ (0.5) & $7.9 \cdot 10^{-4}$ (1.0) & $0.010$ (0.7)\\
\hline
\end{tabular}
\end{center}
\caption{$\epsilon = h$, nonsmooth solution}\label{roughepsh}
\end{table}

\bibliographystyle{plain}   

\end{document}